\documentclass[11pt]{article}
\usepackage[intlimits]{amsmath}
\usepackage{latexsym,amsfonts,amsthm,amssymb, epsfig}
\usepackage{color,comment}
\usepackage[vcentering,dvips]{geometry}
\geometry{papersize={195mm,270mm},total={140mm,228mm}}

\newtheorem{thm}{Theorem}
\newtheorem*{thm*}{Theorem}
\newtheorem{lem}[thm]{Lemma}

\theoremstyle{remark}

\theoremstyle{definition}
\newtheorem{dfn}[thm]{Definition}

\newcommand{\R}{\mathbb{R}}
\newcommand{\N}{\mathbb{N}}
\newcommand{\C}{\mathbb{C}}

\newcommand{\vol}{\operatornamewithlimits{vol}}
\newcommand{\re}{\operatornamewithlimits{Re}}
\newcommand{\im}{\operatornamewithlimits{Im}}



\title{Entropy numbers of finite-dimensional embeddings}
\author{Marta Kossaczk\'a\footnote{Department of Mathematical Analysis, Charles University, Sokolovsk\'a 83, 186 00, Prague 8, Czech Republic},
Jan Vyb\'iral\footnote{Dept. of Mathematics FNSPE, Czech Technical University in Prague, Trojanova 13, 12000 Prague, Czech Republic;
\texttt{jan.vybiral@fjfi.cvut.cz}}}

\begin{document}
\maketitle
\begin{abstract}
Entropy numbers and covering numbers of sets and operators are well known geometric notions, which
found many applications in various fields of mathematics, statistics, and computer science.
Their values for finite-di\-men\-sion\-al embeddings $id:\ell_p^n\to \ell_q^n$,
$0<p,q\le\infty$, are known (up to multiplicative constants) since the pioneering work of Sch\"utt in 1984, with later improvements by
Edmunds and Triebel, K\"uhn and Gu\'edon and Litvak. The aim of this survey is to give a self-contained presentation
of the result and an overview of the different techniques used in its proof.
\end{abstract}
{\bf Keywords:} Entropy number, covering number, finite-dimensional vector space, volume argument, $\varepsilon$-net

\section{Introduction}

The concept of covering numbers can be traced back to the work of Kolmogorov \cite{Ko1, Ko2}
and became since then an important tool used in several areas of theoretical and applied
mathematics. Furthermore,  Pietsch introduced in his book \cite{Pietsch} a formal definition
of an inverse function of covering numbers under the name of entropy numbers.
Later on, Carl and Triebel \cite{BC,CaTr} investigated the relation between entropy numbers
and other geometric and approximation quantities related to sets and operators, most importantly to eigenvalues of compact operators.

Due to the natural definition of covering and entropy numbers, and due to their relations to other geometric notions,
these concepts found applications in many areas of pure and applied mathematics,
including geometry of Banach spaces  \cite{BLM,apl:geom:1,apl:geom:2,apl:geom:3,Pisier},
information theory \cite{apl:ee:4,RWY,apl:ee:2,apl:ee:1,apl:ee:3}, and random processes \cite{LT,apl:stat:1}.
They also appeared in the theory of compressed sensing \cite{CRT,D} in the study of optimality
of recovery of sparse vectors and in the study of eigenvalue problems in Banach spaces \cite{ET,Kon}.
One of the most important classes of operators, whose entropy numbers are well understood and often applied,
are the identities between finite-dimensional vector spaces.
The main aim of this note is to present a self-contained overview of this area.
To state the main result in detail, we need to recall some notation.

The couple $(X,\|\cdot\|_X)$
is called a quasi-Banach space, if $X$ is a real or complex vector space and the mapping $\|\cdot\|_X:X\to [0,\infty)$ satisfies
\begin{enumerate}
\item[(i)] $\|x\|_X=0$ if, and only if, $x=0$,
\item[(ii)] $\|\alpha x\|_X=|\alpha|\cdot \|x\|_X$ for all $\alpha\in\R$ (or in $\C$) and all $x\in X$,
\item[(iii)] there exists a constant $C\ge 1$ such that $\|x+y\|_X\le C(\|x\|_X+\|y\|_X)$ for all $x,y\in X$,
\item[(iv)] $X$ is complete with respect to $\|\cdot\|_X.$ 
\end{enumerate}
If the constant $C$ in (iii) can be chosen to be equal to one, $X$ is actually a Banach space. If $\|\cdot\|_X$
satisfies the axioms above with (iii) replaced by 
$$
\|x+y\|^p_X\le \|x\|_X^p+\|y\|_X^p,\quad x,y\in X
$$
for some $0<p\le 1$, then $(X,\|\cdot\|_X)$ is called a $p$-Banach space and $\|\cdot\|_X$ is a $p$-norm.
It follows that a Banach space $X$ is also a $p$-Banach space for $p=1$.
It is easy to see that every $p$-norm is a quasi-norm with $C=2^{1/p-1}$. On the other hand, by the Aoki-Rolewicz theorem \cite{Aoki,Rol}, every quasi-norm
is equivalent to some $p$-norm for a suitably chosen $p$. We refer to \cite{Kalt} for a survey on quasi-Banach spaces.

If $X$ is a vector space equipped with some (quasi-)norm or $p$-norm $\|\cdot\|_X$, we denote by $B_X$
its unit ball, i.e. the set $B_X=\{x\in X:\|x\|_X\le 1\}.$ The symbol ${\mathcal L}(X,Y)$ stands for the set of all bounded linear operators from $X$ to $Y$.
For $0<p\le\infty$, we define $\ell_p^n(\R)$ (or $\ell_p^n(\C)$) to be the Euclidean space $\R^n$ (or $\C^n$) equipped with the (quasi-)norm
\[
\|x\|_p=\|(x_i)_{i=1}^n\|_p=\begin{cases}
\displaystyle \Bigl(\sum_{i=1}^n|x_i|^p\Bigr)^{1/p}\,,& 0< p < \infty\,; \\
\displaystyle\max_{1\leq i \leq n}|x_i|\,,& p=\infty.
\end{cases}
\]
The unit ball of $\ell_p^n(\R)$ will be denoted by $B_p^n$.
It is easy to see, that $\ell_p^n(\R)$ and $\ell_p^n(\C)$ are Banach spaces if $p\ge 1$ and $p$-Banach spaces if $0<p\le 1.$
Therefore, we will denote $\bar p=\min(1,p)$ and use the triangle inequality in $\ell_p^n(\R)$ and $\ell_p^n(\C)$ in the form
$$
\|x+y\|_p^{\bar p}\le \|x\|_p^{\bar p}+\|y\|_p^{\bar p}.
$$

We define now the concept of entropy numbers of a bounded linear operator $T$ between two (quasi-)Banach spaces $X$ and $Y$. Essentially, we are allowed to use
$2^{k-1}$ balls of radius $r$ in $Y$ to cover the image of the unit ball of $X$ by $T$ and $e_k(T)$ denotes the smallest $r$, for which this is still possible.

\begin{dfn}\label{defn:entropy}
Let $X$ and $Y$ be Banach spaces, $p$-Banach spaces, or quasi-Banach spaces. Let $T:X\to Y$ be a bounded linear operator and let $k\ge 1$ be a 
positive integer.
The $k^{\rm th}$ (dyadic) entropy number of $T$ is defined as
\begin{equation}\label{eq:entropy:def1}
e_k(T):=\inf\Bigr\{r>0:\exists y_1,\dots,y_{2^{k-1}}\in Y\ \text{with}\ T(B_X)\subset \bigcup_{j=1}^{2^{k-1}}(y_j+r B_Y)\Bigl\}.
\end{equation}
\end{dfn}

The relation of the entropy numbers to the covering numbers of Kolmogorov is quite straightforward.
If $K\subset Y$ and $r>0$, then the covering number $N(K,Y,r)$ is the smallest number $N$ such that there exist points
$y_1,\dots,y_N$ with $K\subset\bigcup_{j=1}^N (y_j+rB_Y).$ The entropy numbers $e_k(T)$ can then be equivalently defined as
$$
e_n(T)=\inf\{r>0:N(T(B_X),Y,r)\le 2^{k-1}\}.
$$

Although easy to define, the entropy numbers of some specific operator $T$ are usually rather difficult to calculate, or estimate.
One class of operators, where the upper and lower bounds on entropy numbers are known, are the identities between finite-dimensional vector spaces.
The main aim of this note is to present a self-contained proof of the following result.
\begin{thm}\label{hlavna1''}
Let $0<p,q\leq \infty$ and let $n\in\N$.
\begin{itemize}
\item[a)] If $0<p\leq q\leq \infty $ then for all $k\in \mathbb{N}$ it holds
\begin{equation}\label{eq:2ekviv1''}
e_k (id:\ell^n_p(\mathbb{R})\rightarrow \ell^n_q(\mathbb{R}))\sim 
\begin{cases} 1 & \text{if} \quad 1\leq k\leq \log_2n,\\
\displaystyle\biggl(\frac{\log_2(1+n/k)}{k}\biggr)^{\frac{1}{p}-\frac{1}{q}} & \text{if} \quad \log_2n\leq k\leq n,\\[10pt]
\displaystyle2^{-\frac{k-1}{n}}n^{\frac{1}{q}-\frac{1}{p}} & \text{if} \quad n\leq k.
\end{cases}
\end{equation}
\item[b)] If $0<q\leq p\leq \infty$ then for all $k\in \mathbb{N}$ it holds
\begin{equation}\label{eq:2ekviv2''}
 e_k (id:\ell^n_p(\mathbb{R})\rightarrow \ell^n_q(\mathbb{R}))\sim  2^{-\frac{k-1}{n}}n^{\frac{1}{q}-\frac{1}{p}}.
\end{equation}
\end{itemize}
The constants of equivalence in both \eqref{eq:2ekviv1''} and \eqref{eq:2ekviv2''} may depend on $p$ and $q$, but are independent of $k$ and $n$.
\end{thm}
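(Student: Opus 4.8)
I would prove the matching bounds $\lesssim$ and $\gtrsim$ in each case, after recording four standard tools. (I) A \emph{volume lower bound}: if $\dim Y=n$ then $e_k(T)\ge 2^{-(k-1)/n}\bigl(\vol(T(B_X))/\vol(B_Y)\bigr)^{1/n}$, since the $2^{k-1}$ balls of radius $r$ in the definition have total volume $2^{k-1}r^n\vol(B_Y)$; combined with $\vol(B_p^n)^{1/n}\sim n^{-1/p}$ (Stirling) this gives $e_k(id:\ell_p^n\to\ell_q^n)\gtrsim 2^{-(k-1)/n}n^{1/q-1/p}$ for all $p,q,k$. (II) An \emph{elementary covering bound} $N(B_q^n,\ell_q^n,\varepsilon)\le(c_q/\varepsilon)^n$ for $0<\varepsilon\le 1$, obtained from a maximal $\varepsilon$-separated subset and a volume comparison ($c_q$ absorbing the quasi-norm constant of $\ell_q^n$). (III) \emph{Multiplicativity and monotonicity}: $e_{k+l-1}(S\circ T)\le e_k(S)e_l(T)$ and $e_{k+1}\le e_k$; in particular $e_1(id)\sim\|id\|$ (which is $1$ if $p\le q$ and $n^{1/q-1/p}$ if $q\le p$) and $e_k(id:\ell_q^n\to\ell_q^n)\sim 2^{-(k-1)/n}$. (IV) A \emph{separation lower bound}: if $B_X$ contains $z_1,\dots,z_M$ with $\|Tz_i-Tz_j\|_Y\ge\delta$ for $i\ne j$ and $M>2^{k-1}$, then $e_k(T)\ge 2^{-1/\bar q}\delta$, as two of the $Tz_i$ must land in a common covering ball.

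Case b) and the outer regimes of case a) follow quickly. For $0<q\le p$ the lower bound is (I); for the upper bound I would use $B_p^n\subseteq n^{1/q-1/p}B_q^n$ (sharp H\"older), rescale, and apply (II), getting $N(B_p^n,\ell_q^n,r)\le(c_qn^{1/q-1/p}/r)^n$, so that $r\sim n^{1/q-1/p}2^{-(k-1)/n}$ is admissible (for small $k$ one uses $e_k\le e_1\sim n^{1/q-1/p}$). For $0<p\le q$ set $\alpha=1/p-1/q\ge 0$. If $1\le k\le\log_2 n$: $e_k\le e_1=1$, and (IV) applied to $e_1,\dots,e_n\in B_p^n$, which are pairwise at $\ell_q^n$-distance $2^{1/q}\gtrsim 1$, gives $e_k\gtrsim 1$. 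If $k\ge n$: the lower bound is (I), and (III) together with the middle-regime bound at $k=n$ gives $e_k(id:\ell_p^n\to\ell_q^n)\le e_{k-n+1}(id:\ell_q^n\to\ell_q^n)\,e_n(id:\ell_p^n\to\ell_q^n)\lesssim 2^{-(k-n)/n}n^{-\alpha}\sim 2^{-(k-1)/n}n^{-\alpha}$.

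The heart is the regime $\log_2 n\le k\le n$ of case a). For the \emph{upper bound} I would first reduce to $q=\infty$: from $\|v\|_q\le\|v\|_\infty^{1-p/q}\|v\|_p^{p/q}$, an $\ell_\infty$-ball of radius $s$ meets $B_p^n$ in a set of $\ell_q$-diameter $\le C_{p,q}s^{1-p/q}$, so $e_k(id:\ell_p^n\to\ell_q^n)\le C_{p,q}\,e_k(id:\ell_p^n\to\ell_\infty^n)^{1-p/q}$; since $(1-p/q)/p=\alpha$, it remains to show $e_k(id:\ell_p^n\to\ell_\infty^n)\lesssim_p(\log_2(1+n/k)/k)^{1/p}$. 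For this I would use an explicit net: rounding each coordinate of $x\in B_p^n$ towards zero to a multiple of $s$ produces $\tilde x\in s\mathbb Z^n$ with $\|x-\tilde x\|_\infty\le s$, at most $s^{-p}$ nonzero coordinates, and $\|\tilde x\|_p\le 1$, whence $N(B_p^n,\ell_\infty^n,s)\le\#\{z\in\mathbb Z^n:\|z\|_p\le 1/s\}$. For the \emph{lower bound}: for $n/C_1\le k\le n$ the target value is $\sim n^{-\alpha}$, supplied by (I); for $\log_2 n\le k\le n/C_1$ I would use Gilbert--Varshamov --- with $m\asymp k/\log_2(1+n/k)$ there exist more than $2^{k-1}$ subsets $A\subseteq\{1,\dots,n\}$ of size $m$ with pairwise intersections $\le(1-\eta)m$ (a fixed small $\eta>0$), and the points $m^{-1/p}\mathbf 1_A\in B_p^n$ are then pairwise at $\ell_q^n$-distance $\gtrsim m^{-\alpha}\sim(\log_2(1+n/k)/k)^{\alpha}$, so (IV) applies.

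The main obstacle is the lattice-point count $\log_2\#\{z\in\mathbb Z^n:\|z\|_p\le R\}\lesssim_p R^p\log_2(1+n/R^p)$ for $1\le R^p\le n$ (together with the matching lower count behind Gilbert--Varshamov): the naive estimate $\#\le\vol((R^p+n)^{1/p}B_p^n)$ is hopelessly lossy when $R^p\ll n$, since the unit cube protrudes from a small $\ell_p$-ball. My remedy is to stratify by support: any such $z$ is supported on $\le R^p$ coordinates, and on a fixed support of size $j\le R^p$ the cube-volume bound in dimension $j$ --- whose inflation factor is only $j^{1/p}\le R$ --- gives $\#(\mathbb Z^j\cap RB_p^j)\le(C_pRj^{-1/p})^j$; summing $\binom nj(C_pRj^{-1/p})^j$ over $j\le R^p$, and using $R^p\le n$ to see that the summand is largest at $j\sim R^p$, yields the claimed bound. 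Then, putting $R=1/s$ and choosing $s$ so that $R^p\log_2(1+n/R^p)\asymp k$ --- which by a routine inversion forces $s^{-p}\asymp k/\log_2(1+n/k)$ --- closes the middle regime. The rest is bookkeeping: checking that the three branches of a) match up to constants at $k=\log_2 n$ and $k=n$, and that all implicit constants depend only on $p$ and $q$.
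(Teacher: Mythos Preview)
Your plan is correct and follows the paper's overall architecture: elementary and volume arguments for the outer ranges of $k$, reduction to $q=\infty$ via the interpolation inequality $\|v\|_q\le\|v\|_\infty^{1-p/q}\|v\|_p^{p/q}$ for the middle-range upper bound, and a combinatorial separated set for the middle-range lower bound. The differences are in the implementation of the two middle-range bounds, and both of your choices work.

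For the \emph{upper bound} with $q=\infty$, the paper does not round to a lattice. Instead it truncates $x\in B_p^n$ to its $m$ largest coordinates (giving $\|x-\tilde x\|_\infty\le m^{-1/p}$), then invokes the already-proven volume upper bound in dimension $m$ to cover $B_p^m$ by $2^{l-1}$ cubes of side $\sim m^{-1/p}$ with $l\asymp m$; the total count is $\binom{n}{m}2^{l-1}$. Your lattice-rounding route reaches the same count through $\sum_{j\le R^p}\binom{n}{j}\#(\mathbb Z^j\cap RB_p^j)$, and your stratified cube-volume bound $\#(\mathbb Z^j\cap RB_p^j)\le (C_pR j^{-1/p})^j$ for $j\le R^p$ is the correct replacement; the monotonicity of the summand up to $j\sim R^p$ (which you note) is what makes the sum comparable to its last term. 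Both arguments then face the same ``routine inversion'' $m\log_2(1+n/m)\asymp k\Leftrightarrow m\asymp k/\log_2(1+n/k)$, which the paper carries out explicitly. Your approach is a bit more self-contained (no recursive call to the volume upper bound in a smaller dimension); the paper's approach reuses machinery and keeps the constants a little more transparent.

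For the \emph{lower bound}, your Gilbert--Varshamov construction with indicator vectors $m^{-1/p}\mathbf 1_A$ is exactly the alternative the paper records in its final section (their Lemma on subsets $T_1,\dots,T_N$ with small pairwise intersections). The paper's main proof uses a closely related packing in $\{-1,0,1\}^n$ with a Hamming-distance condition; either version yields $\gtrsim (n/m)^{cm}$ well-separated points and hence the desired bound. Finally, for $k\ge n$ you bootstrap via multiplicativity from the $k=n$ case, whereas the paper gives a direct maximal-separated-set volume argument valid for all $k\gtrsim n$; both are standard and equivalent up to constants.
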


Theorem \ref{hlavna1''} was first proved for $1\le p<q\le\infty$ by Sch\"utt \cite{S} with partial results given before by
H\"ollig \cite{H} and Pietsch \cite[Section 12.2]{Pietsch}. The case of quasi-Banach spaces (i.e. when $p$ and/or $q$ is smaller than one)
was studied by Edmunds and Triebel in \cite[Section 3.2.2]{ET}, who provided
the estimates from above in \eqref{eq:2ekviv1''} including the intermediate $k$'s with $\log_2n\le k\le n.$
Finally, the corresponding lower bound was supplied by K\"uhn \cite{K2001} and, independently, by Gu\'edon and Litvak in \cite{GL}.
Although all the estimates in Theorem \ref{hlavna1''} may be found in the literature since nearly two decades, there seems to be no
place, were all the parts would appear together - the reader has to combine several sources, sometimes using different notations.

Interestingly enough, the proof of Theorem \ref{hlavna1''} requires a combination of several different techniques, which are of independent interest.
The most natural are the so-called volume arguments. Quite intuitively, if we want to cover a body $K\subset \R^n$ by a number of other bodies $L_1,\dots,L_N\subset\R^n$,
then their volume combined must be larger than the volume of $K$. Here, volume can be any positive measure on $\R^n$.
As $L_j$'s are now different translates of a fixed dilation of one fixed body $L$,
it is most convenient to work with a shift-invariant measure, which behaves well with respect to dilations. It is therefore most natural
to work with the usual Lebesgue measure on $\R^n$ and $\vol(K)$ will be the usual Lebesgue volume of a measurable set $K\subset \R^n.$
If $K=B_X$ and $L=B_Y$ are unit balls of some quasi-Banach spaces $X$ and $Y$, and if the number $N$ is fixed to be $N=2^{k-1}$ for a positive integer $k$,
this can be immediately translated into lower bounds of the entropy numbers $e_k(id:X\to Y)$.

With a bit of additional work, volume arguments can be also used to give upper bounds on entropy numbers. Indeed, if $r>0$ is fixed,
we take a maximal set $y_1,\dots,y_N\in B_X\subset Y$ with $\|y_i-y_j\|_Y\ge r.$ Then, on one hand, the sets $y_j+c_1rB_Y$ are disjoint
and $B_X$ is covered by the union of $y_j+c_2rB_Y$ for suitably chosen constants $c_1,c_2>0.$ On the other hand,
the sets $y_j+c_1rB_Y$ are included in a certain multiple of $B_X$, which must therefore have a larger volume than all the disjoint sets $y_j+c_1rB_Y$
combined. This gives an upper bound on their number $N$, leading to an upper bound on the entropy numbers.

Although the volume arguments represent a powerful technique, which can in principle be applied to all the parameter settings in Theorem \ref{hlavna1''},
the obtained bounds are not always optimal. Actually, it turns out that the volume arguments provide optimal bounds (up to a multiplicative constant)
only when $k$ is large or, equivalently, when $r$ is small. For smaller $k$'s,
direct combinatorial estimates are needed to provide both the lower and the upper bounds.
Geometrically it means, that any good cover of $T(B_X)$ with a small number of sets $y_j+rB_Y$
needs to have big overlap and/or to cover also some large neighborhood of $T(B_X).$

The structure of the paper is as follows. Section \ref{sec:2} gives basic properties of entropy numbers and Gamma function,
presents the calculation of the volume of $B_p^n$, and provides a couple of lemmas used later. The proof of Theorem \ref{hlavna1''}
comes in Section \ref{sec:3}. Finally, Section \ref{sec:4} collects few additional remarks and topics, including the extension
of Theorem \ref{hlavna1''} to the complex setting.

\section{Preparations}\label{sec:2}

We start by recalling few well-known basic facts about entropy numbers.
Although the reader may find the proof, for instance, in \cite{CaSt} or \cite{ET}, we include it for the sake of completeness.
\begin{thm}\label{thm:entropy}Let $X,Y,Z$ be $p$-Banach spaces for some $0<p\le 1.$
Let $S,T\in{\mathcal L}(X,Y)$ and $R\in{\mathcal L}(Y,Z)$. Then, for all $k,l\in\N$, it holds
\begin{enumerate}
\item[(i)] $\|T\|\geq e_1(T) \geq e_2(T) \geq \dots \geq 0$ (monotonicity);
\item[(ii)] $e^p_{k+l-1}(S+T) \leq e^p_k(S)+e^p_l(T)$ (subadditivity);
\item[(iii)] $e_{k+l-1}(R\circ T) \leq e_k(R)\cdot e_l(T)$ (submultiplicativity).
\end{enumerate}
\end{thm}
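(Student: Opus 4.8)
The plan is to prove the three assertions in turn, reducing everything to the defining infimum in Definition~\ref{defn:entropy} and exploiting the $p$-triangle inequality $\|u+v\|_Z^p \le \|u\|_Z^p + \|v\|_Z^p$ (and similarly in $X$ and $Y$).

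\textbf{Monotonicity (i).} First I would note $e_1(T)\le\|T\|$: a single ball $0 + \|T\|\,B_Y$ already covers $T(B_X)$, since $x\in B_X$ gives $\|Tx\|_Y\le\|T\|$. Nonnegativity is clear from the definition. For $e_{k+1}(T)\le e_k(T)$, I would simply observe that any admissible cover of $T(B_X)$ by $2^{k-1}$ balls of radius $r$ is, a fortiori, a cover by $2^{k}$ balls of radius $r$ (just repeat some centres, or add dummy ones); hence every $r$ admissible at level $k$ is admissible at level $k+1$, so the infimum over the larger family is no larger. This gives the full chain in (i).

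\textbf{Subadditivity (ii).} Fix $\varepsilon>0$ and choose $r\le e_k(S)+\varepsilon$ with centres $u_1,\dots,u_{2^{k-1}}$ covering $S(B_X)$ by balls $u_i+rB_Y$, and $s\le e_l(T)+\varepsilon$ with centres $v_1,\dots,v_{2^{l-1}}$ covering $T(B_X)$ by balls $v_j+sB_Y$. For $x\in B_X$ we have $Sx\in u_i+rB_Y$ and $Tx\in v_j+sB_Y$ for some $i,j$, hence $(S+T)x\in (u_i+v_j) + rB_Y + sB_Y$. The key step is to bound the radius of the set $rB_Y+sB_Y$ in the $p$-norm: if $\|y\|_Y\le r$ and $\|y'\|_Y\le s$ then $\|y+y'\|_Y^p \le r^p+s^p$, so $rB_Y+sB_Y\subset (r^p+s^p)^{1/p}B_Y$. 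Thus the $2^{k-1}\cdot 2^{l-1}=2^{k+l-2}$ points $u_i+v_j$ give a cover of $(S+T)(B_X)$ by balls of radius $(r^p+s^p)^{1/p}$, so $e_{k+l-1}(S+T)\le (r^p+s^p)^{1/p}\le \bigl((e_k(S)+\varepsilon)^p+(e_l(T)+\varepsilon)^p\bigr)^{1/p}$. Letting $\varepsilon\to 0$ and raising to the $p$th power yields (ii).

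\textbf{Submultiplicativity (iii).} Again fix $\varepsilon>0$; pick $r\le e_l(T)+\varepsilon$ with centres $v_1,\dots,v_{2^{l-1}}$ so that $T(B_X)\subset\bigcup_j (v_j+rB_Y)$, and $s\le e_k(R)+\varepsilon$ with centres $w_1,\dots,w_{2^{k-1}}$ so that $R(B_Y)\subset\bigcup_i(w_i+sB_Z)$. By homogeneity of $R$, $R(rB_Y)=rR(B_Y)\subset\bigcup_i (rw_i + rsB_Z)$, and since $R$ is linear, $R(v_j+rB_Y)\subset Rv_j + \bigcup_i(rw_i+rsB_Z)$. Hence $(R\circ T)(B_X)$ is covered by the $2^{l-1}\cdot 2^{k-1}=2^{k+l-2}$ balls $(Rv_j+rw_i)+rsB_Z$ of radius $rs$, so $e_{k+l-1}(R\circ T)\le rs\le (e_l(T)+\varepsilon)(e_k(R)+\varepsilon)$; let $\varepsilon\to 0$. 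I expect the only mild subtlety to be the bookkeeping of indices and the use of the $p$-triangle inequality in (ii); everything else is a direct unwinding of the definitions, so no genuine obstacle arises.
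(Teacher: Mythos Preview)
Your proof is correct and follows essentially the same route as the paper: an $\varepsilon$-perturbation of optimal covers, the inclusion $rB_Y+sB_Y\subset (r^p+s^p)^{1/p}B_Y$ from the $p$-triangle inequality for (ii), and linearity/homogeneity of $R$ to transport covers for (iii). The only cosmetic difference is that the paper argues set-wise via $(S+T)(B_X)\subset S(B_X)+T(B_X)$ while you track a single $x\in B_X$, but the content is identical.
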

\begin{proof}Monotonicity of entropy numbers follows directly from \eqref{eq:entropy:def1}. Similarly, $e_1(T)\le \|T\|$ is
implied by $T(B_X)\subset \|T\|\cdot B_Y.$

For the proof of subadditivity, let $k,l$ be positive integers and let $\varepsilon>0.$ Then there are $y_1,\dots,y_{2^{k-1}}\in Y$
and $z_1,\dots,z_{2^{l-1}}\in Y$ with
$$
S(B_X)\subset \bigcup_{i=1}^{2^{k-1}}\Bigl(y_i+(e_k(S)+\varepsilon) B_Y\Bigr)\quad\text{and}\quad
T(B_X)\subset \bigcup_{j=1}^{2^{l-1}}\Bigl(z_j+(e_l(T)+\varepsilon) B_Y\Bigr).
$$
Then
\begin{align*}
(S+T)(B_X)&\subset S(B_X)+T(B_X)\\
&\subset\biggl[\bigcup_{i=1}^{2^{k-1}}\Bigl(y_i+(e_k(S)+\varepsilon) B_Y\Bigr)\biggr]+ \biggl[\bigcup_{j=1}^{2^{l-1}}\Bigl(z_j+(e_l(T)+\varepsilon) B_Y\Bigr)\biggr]\\
&=\bigcup_{i,j}\Bigl(y_i+z_j+(e_k(S)+\varepsilon) B_Y+(e_l(T)+\varepsilon) B_Y\Bigr)\\
&\subset\bigcup_{i,j}\Bigl(y_i+z_j+[(e_k(S)+\varepsilon)^p+(e_l(T)+\varepsilon)^p]^{1/p} B_Y\Bigr)
\end{align*}
and taking the infimum over $\varepsilon>0$ gives the result.

The proof of submultiplicativity follows in a similar way. Indeed, if $k,l$ are positive integers and $\varepsilon>0$,
we find $y_1,\dots,y_{2^{l-1}}\in Y$ and $z_1,\dots,z_{2^{k-1}}\in Z$ with
$$
T(B_X)\subset \bigcup_{i=1}^{2^{l-1}}\Bigl(y_i+(e_l(T)+\varepsilon) B_Y\Bigr)\quad\text{and}\quad R(B_Y)\subset \bigcup_{j=1}^{2^{k-1}}\Bigl(z_j+(e_k(R)+\varepsilon) B_Z\Bigr).
$$
The proof then follows from
\begin{align*}
(R\circ T)(B_X)&=R(T(B_X))\subset R\biggl(\,\bigcup_{i=1}^{2^{l-1}}\Bigl(y_i+(e_l(T)+\varepsilon) B_Y\Bigr)\biggr)\\
&=\bigcup_{i=1}^{2^{l-1}}\Bigl(Ry_i+(e_l(T)+\varepsilon) R(B_Y)\Bigr)\\
&\subset \bigcup_{i,j}\Bigl(Ry_i+(e_l(T)+\varepsilon)z_j+(e_l(T)+\varepsilon)(e_k(R)+\varepsilon) B_Z\Bigr).\qedhere
\end{align*}
\end{proof}

We will also need few basic facts about the Gamma function, which is defined by
$\Gamma(t)=\int_0^\infty x^{t-1}e^{-x}dx$ for every positive real number $t>0$. By partial integration, we get
\begin{equation}\label{eq:Gamma:ind}
\Gamma(1+t)=t\,\Gamma(t) \quad \text{for every}\quad t>0
\end{equation}
and $\Gamma(n)=(n-1)!$ for every positive integer $n.$ Furthermore, by standard calculus it follows that $\Gamma$ is a continuous function on $(0,\infty)$.

\begin{lem}\label{lem:Gamma}Let $0<p<\infty$. Then
\begin{equation}\label{eq:Gamma_1}
\Gamma(1+x/p)^{1/x}\sim x^{1/p},\quad x\ge 1,
\end{equation}
where the constants of equivalence may depend on $p$.
\end{lem}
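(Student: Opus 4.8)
The plan is to use Stirling's formula for the Gamma function together with the functional equation \eqref{eq:Gamma:ind} to reduce the claim to an elementary asymptotic estimate. Recall that Stirling's formula gives constants $0<c_1\le c_2<\infty$ such that
\[
c_1\sqrt{t}\,(t/e)^t\le \Gamma(1+t)\le c_2\sqrt{t}\,(t/e)^t,\qquad t\ge 1.
\]
Applying this with $t=x/p$ (which is $\ge 1/p$, not necessarily $\ge 1$, so a small separate argument is needed for $1\le x\le p$, see below), we get
\[
\Gamma(1+x/p)^{1/x}\sim \Bigl(\sqrt{x/p}\Bigr)^{1/x}\cdot\Bigl(\frac{x/p}{e}\Bigr)^{1/p},\qquad x\ge p.
\]
The second factor is $\bigl(x/(ep)\bigr)^{1/p}\sim x^{1/p}$ with constants depending only on $p$, so it remains to check that the first factor $\bigl(\sqrt{x/p}\bigr)^{1/x}=\exp\bigl(\tfrac{1}{2x}\log(x/p)\bigr)$ is bounded above and below by positive constants for $x\ge p$. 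Since $\tfrac{\log(x/p)}{2x}\to 0$ as $x\to\infty$ and is continuous and bounded on $[p,\infty)$ (being nonnegative there and tending to $0$), its exponential lies in a compact subinterval of $(0,\infty)$; this disposes of the range $x\ge p$.

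For the remaining range $1\le x\le p$ (which is empty unless $p>1$), I would argue by compactness: the function $x\mapsto \Gamma(1+x/p)^{1/x}$ is continuous and strictly positive on the compact interval $[1,p]$ (continuity of $\Gamma$ on $(0,\infty)$ was noted in the excerpt, $1+x/p\in[1+1/p,2]$ stays in $(0,\infty)$, and $\Gamma>0$ there), hence bounded above and below by positive constants on $[1,p]$; likewise $x^{1/p}$ is continuous and positive on $[1,p]$. Therefore the ratio $\Gamma(1+x/p)^{1/x}/x^{1/p}$ is bounded above and below by positive constants on $[1,p]$, and these constants depend only on $p$. Combining the two ranges yields \eqref{eq:Gamma_1}.

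The only genuinely delicate point is making the Stirling estimate self-contained if one does not wish to quote it outright; but since the paper already allows appeal to "standard calculus" for properties of $\Gamma$, I would simply invoke Stirling's bounds as a known fact. With that granted, the argument is entirely routine: the dominant term $(t/e)^t$ with $t=x/p$ contributes the $x^{1/p}$ behaviour, the polynomial correction $\sqrt{t}$ becomes harmless after taking the $1/x$-th power, and the small initial interval is handled by compactness. Thus the main obstacle is purely bookkeeping — tracking that all multiplicative constants depend on $p$ alone and not on $x$ — rather than anything conceptual.
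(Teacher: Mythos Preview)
Your proof is correct. You derive the lemma directly from Stirling's bounds for $\Gamma$, splitting into the range $x\ge p$ (where Stirling applies to $t=x/p\ge 1$, the polynomial correction $\sqrt{t}$ becomes harmless after the $1/x$-th root, and the main term $(t/e)^{t}$ yields $x^{1/p}$) and the compact interval $[1,p]$ (handled by continuity and positivity of both sides). The paper itself acknowledges in the first sentence of its proof that the result is a corollary of Stirling's formula, but then deliberately supplies an elementary self-contained argument instead: it uses only the functional equation $\Gamma(1+t)=t\Gamma(t)$ to peel off integer shifts, writing $\Gamma(1+x/p)=\Gamma(x/p-k+1)\prod_{j=0}^{k-1}(x/p-j)$ with the residual $\Gamma$-value trapped in a fixed compact interval $[1/p+1,1/p+2]$, and then estimates the product and its logarithm via Riemann sums of $\int\log t\,dt$. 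Your route is shorter and conceptually cleaner once Stirling is taken as known; the paper's route is longer but avoids importing Stirling as a black box, in keeping with the survey's stated goal of being fully self-contained.
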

\begin{proof} The result is a corollary of Stirling's formula \cite[Chapter A.2, Theorem 2.3]{Stein}, but we give also a simple proof for reader's convenience.

If $x=n$ is a positive integer and $p=1$, the result follows from
\begin{equation}\label{eq:Gamma_2}
n\log(n)-n\le \log(n!)=\log(\Gamma(1+n))\le n\log(n),
\end{equation}
where the right-hand side of \eqref{eq:Gamma_2} comes from $n!\le n^n$ and the left-hand side can be obtained by
taking the Riemann sum of $\int_1^n\log(x)dx$ at right endpoints.

We modify this strategy also for $x\ge 1$ and $p>0.$ Let $k\in\N_0$ be the unique integer with
$$
\frac{1}{p}+k\le \frac{x}{p}<\frac{1}{p}+k+1.
$$
Iterative application of \eqref{eq:Gamma:ind} leads to
\begin{equation}\label{eq:Gamma_3}
\Gamma(1+x/p)=\Gamma(x/p-k+1)\prod_{j=0}^{k-1}(x/p-j)\le C_p(x/p)^k,
\end{equation}
where $C_p=\sup_{1/p+1\le t\le 1/p+2}\Gamma(t)$. We conclude, that
$$
\Gamma(1+x/p)^{1/x}x^{-1/p}\le C_p^{1/x}p^{-k/x}x^{k/x-1/p}\le C_p^{1/x}\max(1,p^{-1/p})x^{-1/px},
$$
which is bounded (by a constant depending on $p$) for $x\in[1,\infty).$

On the other hand, using the first identity in \eqref{eq:Gamma_3} and Riemann sums, we obtain
\begin{align*}
\log\biggl(\frac{\Gamma(1+x/p)^{1/x}}{x^{1/p}}\biggr)&=\frac{1}{x}\log\Gamma(1+x/p)-\frac{1}{p}\log(x)\\
&\ge \frac{1}{x}\log(c_p)+\frac{1}{x}\sum_{j=0}^{k-1}\log(x/p-j)-\frac{1}{p}\log(x)\\
&\ge \frac{1}{x}\log(c_p)+\frac{1}{x}\int_{x/p-k}^{x/p}\log(t)dt-\frac{1}{p}\log(x)\\
&=\frac{1}{x}\log(c_p)+\frac{1}{p}\log(1/p)-\frac{1}{p}-\frac{1}{x}f(x/p-k),
\end{align*} 
where $c_p=\inf_{1/p+1\le t\le 1/p+2}\Gamma(t)>0$ and $f(t)=t\log(t)-t.$ 
The last expression is bounded (by a constant depending on $p$) for $x\in[1,\infty)$, as $1/p\le x/p-k<1/p+1.$
\end{proof}

To apply the volume arguments, it is of course necessary to calculate (or at least to estimate) the volume of the unit ball $B_p^n$ in $\ell_p^n(\R)$.
The exact value has been known (at least) since the work of Dirichlet, cf. \cite{Dirichlet}. We give a more modern proof, cf. \cite{Pisier}.

\begin{thm}\label{thm:vol_ball}
Let $0<p\le\infty$ and let $n\in\N$. Then it holds
\begin{equation}\label{eq:vol}
\vol(B^n_p)=\frac{2^n\cdot\Gamma(1+\frac{1}{p})^n}{\Gamma(1+\frac{n}{p})}.
\end{equation}
\end{thm}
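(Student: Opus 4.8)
The plan is to compute the integral $I_n:=\int_{\R^n}e^{-\|x\|_p^p}\,dx$ in two different ways; comparing the two resulting expressions yields \eqref{eq:vol}. The case $p=\infty$ is trivial, since $B_\infty^n=[-1,1]^n$ has volume $2^n$, which agrees with the right-hand side of \eqref{eq:vol} because there $1/p=0$ and $\Gamma(1)=1$. So from now on I would assume $0<p<\infty$.

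First I would evaluate $I_n$ by Fubini's theorem. Since $\|x\|_p^p=\sum_{i=1}^n|x_i|^p$, the integrand factorizes and
$$
I_n=\Bigl(\int_\R e^{-|t|^p}\,dt\Bigr)^n=\Bigl(2\int_0^\infty e^{-s^p}\,ds\Bigr)^n.
$$
The substitution $u=s^p$ turns the inner integral into $\tfrac1p\int_0^\infty e^{-u}u^{1/p-1}\,du=\tfrac1p\Gamma(1/p)=\Gamma(1+1/p)$, where the last equality is \eqref{eq:Gamma:ind} with $t=1/p$. Hence $I_n=\bigl(2\,\Gamma(1+1/p)\bigr)^n$.

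The second computation uses a ``polar coordinates'' formula adapted to the quasi-norm $\|\cdot\|_p$. Note first that $B_p^n\subset[-1,1]^n$, so $\vol(B_p^n)$ is finite, and that the dilation $x\mapsto rx$ gives $\vol(rB_p^n)=r^n\vol(B_p^n)$ for every $r>0$. Consequently, if $\mu$ denotes the image of the Lebesgue measure on $\R^n$ under the map $x\mapsto\|x\|_p$, then $\mu([0,r])=\vol(rB_p^n)=r^n\vol(B_p^n)$, i.e. $\mu$ has density $n\,\vol(B_p^n)\,r^{n-1}$ on $(0,\infty)$. Therefore, for every nonnegative measurable $f$ on $[0,\infty)$,
$$
\int_{\R^n}f(\|x\|_p)\,dx=\int_0^\infty f(r)\,d\mu(r)=n\,\vol(B_p^n)\int_0^\infty f(r)\,r^{n-1}\,dr.
$$
Applying this with $f(r)=e^{-r^p}$ and substituting $u=r^p$ again yields
$$
I_n=n\,\vol(B_p^n)\int_0^\infty e^{-r^p}r^{n-1}\,dr=\frac{n}{p}\,\vol(B_p^n)\int_0^\infty e^{-u}u^{n/p-1}\,du=\frac{n}{p}\,\vol(B_p^n)\,\Gamma(n/p)=\vol(B_p^n)\,\Gamma\!\left(1+\tfrac{n}{p}\right),
$$
where the last step is \eqref{eq:Gamma:ind} with $t=n/p$. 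Comparing the two expressions for $I_n$ gives \eqref{eq:vol}.

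The only point that requires genuine care is the justification of the polar formula, i.e. the claim that the pushforward measure $\mu$ has the stated density; this rests entirely on the scaling identity $\vol(rB_p^n)=r^n\vol(B_p^n)$ together with the finiteness and measurability of $B_p^n$, and if one prefers it can be replaced by a direct layer-cake computation of $\int_{\R^n}e^{-\|x\|_p^p}\,dx=\int_0^\infty\vol(\{x:\|x\|_p^p<-\log t\})\,dt$. Everything else is routine manipulation of the substitution $u=s^p$ and the functional equation \eqref{eq:Gamma:ind}.
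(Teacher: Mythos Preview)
Your proof is correct and follows essentially the same route as the paper: both compute $I_n=\int_{\R^n}e^{-\|x\|_p^p}\,dx$ once by Fubini (factorizing over coordinates) and once via a polar-type formula $\int_{\R^n}f(\|x\|_p)\,dx=n\vol(B_p^n)\int_0^\infty f(r)r^{n-1}\,dr$, then equate. The only presentational differences are that the paper derives the polar formula by writing $f(\|x\|_p)=-\int_{\|x\|_p}^\infty f'(t)\,dt$ and applying Fubini plus integration by parts (your suggested layer-cake alternative), whereas you phrase it via the pushforward measure; and you explicitly dispose of the case $p=\infty$, which the paper's proof silently omits since $e^{-t^p}$ is not defined there.
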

\begin{proof}
Let $f$ be a smooth non-increasing positive function on $[0,\infty)$ quickly decaying to zero at infinity.
Then, by Fubini's theorem and partial integration,
\begin{align*}
\notag \int_{\R^n}f(\|x\|_p)dx&=-\int_{\R^n}\int_{\|x\|_p}^\infty f'(t)dtdx=
-\int_0^\infty \biggl(\int_{x:\|x\|_p\le t}1\,dx\biggr) f'(t)dt\\
&=-\int_0^\infty {\rm vol}(tB_p^n)f'(t)dt=-{\rm vol}(B_p^n)\int_0^\infty t^nf'(t)dt\\
&={\rm vol}(B_p^n)\cdot\int_0^\infty nt^{n-1}f(t)dt.
\end{align*}

For $f(t)=e^{-t^p}$, we get
\begin{align}
\notag\int_{\R^n}e^{-\|x\|_p^p}dx&={\rm vol}(B_p^n)\cdot\int_0^\infty nt^{n-1}e^{-t^p}dt={\rm vol}(B_p^n)\cdot \frac{n}{p}\cdot \int_0^\infty s^{n/p-1}e^{-s}ds\\
\label{eq:FB1}&=\frac{n\,{\rm vol}(B_p^n)\Gamma(n/p)}{p}={\rm vol}(B_p^n)\Gamma(1+n/p).
\end{align}
The proof is then finished by Fubini's theorem
\begin{align*}
{\rm vol}(B_p^n)\Gamma(1+n/p)&=\int_{\R^n}e^{-\|x\|_p^p}dx=\int_{\R^n}e^{-|x_1|^p-\dots-|x_n|^p}dx=
\biggl(2\int_0^\infty e^{-t^p}dt\biggr)^n\\
&=2^n\biggl(\frac{1}{p}\int_0^\infty s^{1/p-1}e^{-s}ds\biggr)^n=2^n\biggl(\frac{\Gamma(1/p)}{p}\biggr)^n=2^n\Gamma(1+1/p)^n.
\end{align*}
\end{proof}

Theorem \ref{thm:vol_ball} combined with Lemma \ref{lem:Gamma} gives
\begin{equation}\label{eq:Gamma:eq}
\vol(B_p^n)^{1/n}=\frac{2\Gamma(1+\frac{1}{p})}{\Gamma(1+\frac{n}{p})^{1/n}}\sim n^{-1/p},\quad n\ge 1,
\end{equation}
where the constants of equivalence depend again on $p$.

Next, we collect two simple facts about the $\ell_p^n$-(quasi-)norms. The easy proof is left to the reader.
\begin{lem}\label{lem:norm}
\begin{itemize}
\item[(i)] Let $0<p,q\le\infty$ and $n\in\N$. Then
\begin{equation*}
\|id:\ell_p^n(\R)\to\ell_q^n(\R)\|=\max(1,n^{1/q-1/p}).
\end{equation*}
\item[(ii)] Let $0<p<q<\infty$. Then $\|x\|_q\le \|x\|^{p/q}_p\cdot\|x\|^{1-p/q}_\infty$.
\end{itemize}
\end{lem}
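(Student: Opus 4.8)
The plan is to treat the two parts separately; both reduce to elementary applications of Hölder's inequality together with the exhibition of an extremal vector.

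For part (i), I would split according to the sign of $1/q-1/p$. If $p\le q$, then the elementary monotonicity $\|x\|_q\le\|x\|_p$ (valid for every $x\in\R^n$ when $q\ge p$, including the case $q=\infty$) shows that $\|id:\ell_p^n(\R)\to\ell_q^n(\R)\|\le 1$; evaluating at the first standard basis vector $x=e_1$ gives $\|e_1\|_q=1=\|e_1\|_p$, so the norm equals $1$, which matches $\max(1,n^{1/q-1/p})$ since here the exponent is nonpositive. If $p>q$ with $q<\infty$, then writing $\sum_i|x_i|^q=\sum_i|x_i|^q\cdot 1$ and applying Hölder's inequality with exponents $p/q$ and $p/(p-q)$ yields $\|x\|_q^q\le\|x\|_p^q\cdot n^{(p-q)/p}$, i.e.\ $\|x\|_q\le n^{1/q-1/p}\|x\|_p$; this bound is attained at the all-ones vector $x=(1,\dots,1)$, for which $\|x\|_q=n^{1/q}$ and $\|x\|_p=n^{1/p}$, so the norm equals $n^{1/q-1/p}=\max(1,n^{1/q-1/p})$. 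The remaining endpoint $p=\infty$ is handled in the same manner, using directly $\|x\|_q\le n^{1/q}\|x\|_\infty$ with equality again at the all-ones vector.

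For part (ii), with $0<p<q<\infty$, I would simply write, coordinatewise, $|x_i|^q=|x_i|^p\cdot|x_i|^{q-p}\le|x_i|^p\cdot\|x\|_\infty^{q-p}$, sum over $i$ to obtain $\|x\|_q^q\le\|x\|_\infty^{q-p}\,\|x\|_p^p$, and take $q$-th roots, which gives exactly $\|x\|_q\le\|x\|_p^{p/q}\,\|x\|_\infty^{1-p/q}$.

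There is essentially no serious obstacle: the only point requiring a little care is the bookkeeping of the endpoint cases $p=\infty$ and $q=\infty$ in part (i), and the verification that the upper bounds are in fact attained, so that the operator norm equals $\max(1,n^{1/q-1/p})$ rather than merely being bounded by it. This is precisely why the paper leaves the proof to the reader.
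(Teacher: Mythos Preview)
Your proposal is correct in every detail; the case split in (i) together with H\"older's inequality (applied with exponent $p/q>1$, which is legitimate regardless of whether $p$ or $q$ is below~$1$) and the elementary pointwise bound in (ii) are exactly the natural arguments, and your choice of extremal vectors ($e_1$ when $p\le q$, the all-ones vector when $p>q$) verifies sharpness. The paper itself omits the proof entirely (``The easy proof is left to the reader''), so there is nothing further to compare.
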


The following lemma is the analogue of \cite[Proposition 12.1.13]{Pietsch} for quasi-Banach spaces and appeared already in \cite{HKV}.
Although it uses the volume arguments, no calculation of $\vol(B_X)$ is necessary, because two such terms cancel each other out.
The estimate obtained is optimal up to the constant $4^{1/p}$, cf. Section \ref{sec:4.1} for details.
\begin{lem}\label{lem:entropy_finite}
        Let $0<p\leq 1$, $n\in \N$, and let $X$ be a real $n$-dimensional $p$-Banach space. Then, for all $k\in\N$,
        \begin{equation}\label{eq:idX}
        e_k(id:X\to X) \le 4^{1/p}\cdot 2^{-\frac{k-1}{n}}.
        \end{equation}
\end{lem}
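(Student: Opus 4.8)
The plan is a classical packing/volume argument; the only twists are the bookkeeping forced by the $p$-triangle inequality and a two-regime choice of the covering radius. Fix $k\in\N$ and a number $r>0$ to be specified later. Since $X$ is finite-dimensional, $B_X$ is compact, so we may choose a maximal $r$-separated family $y_1,\dots,y_N\in B_X$, i.e.\ $\|y_i-y_j\|_X\ge r$ for $i\ne j$ and the family cannot be enlarged inside $B_X$. Maximality forces every $x\in B_X$ to lie at $\|\cdot\|_X$-distance $<r$ from some $y_j$ (otherwise $x$ could be added), so $B_X\subset\bigcup_{j=1}^N(y_j+rB_X)$. Hence, as soon as we can guarantee $N\le 2^{k-1}$, Definition~\ref{defn:entropy} yields $e_k(id:X\to X)\le r$.

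To bound $N$, set $\rho:=2^{-1/p}r$ and consider the $N$ \emph{open} balls $B_j:=y_j+\{x\in X:\|x\|_X<\rho\}$. If some point lay in $B_i\cap B_j$ with $i\ne j$, the $p$-triangle inequality would give $\|y_i-y_j\|_X^p<\rho^p+\rho^p=r^p$, contradicting $r$-separation; so the $B_j$ are pairwise disjoint. On the other hand, for $y_j\in B_X$ and $\|z\|_X<\rho$ we get $\|y_j+z\|_X^p\le 1+\rho^p$, so $B_j\subset(1+\rho^p)^{1/p}B_X$. Since $\vol(tB_X)=t^n\vol(B_X)$ with $0<\vol(B_X)<\infty$, and open and closed $\|\cdot\|_X$-balls have the same volume, comparing volumes and cancelling $\vol(B_X)$ gives $N\rho^n\le(1+\rho^p)^{n/p}$, that is,
\[
N\le\Bigl(1+\tfrac{2}{r^p}\Bigr)^{n/p}.
\]
Note that no value of $\vol(B_X)$ is needed here — in particular Theorem~\ref{thm:vol_ball} is not used.

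It remains to choose $r$ so that the right-hand side is at most $2^{k-1}$, i.e.\ $r^p\ge 2\big/\bigl(2^{p(k-1)/n}-1\bigr)$. Suppose first $2^{p(k-1)/n}\ge 2$, equivalently $(k-1)/n\ge 1/p$. Then $2^{p(k-1)/n}-1\ge\tfrac12\,2^{p(k-1)/n}$, so $r:=4^{1/p}2^{-(k-1)/n}$ satisfies $r^p=4\cdot2^{-p(k-1)/n}\ge 2\big/\bigl(2^{p(k-1)/n}-1\bigr)$ and is admissible; thus $e_k(id:X\to X)\le r=4^{1/p}2^{-(k-1)/n}$. If instead $(k-1)/n<1/p$ — in particular if $k=1$ — we use monotonicity together with the trivial identity $\|id:X\to X\|=1$: by Theorem~\ref{thm:entropy}(i), $e_k(id:X\to X)\le e_1(id:X\to X)\le 1$, while $4^{1/p}2^{-(k-1)/n}>4^{1/p}2^{-1/p}=2^{1/p}\ge 2>1$ because $0<p\le 1$. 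In either case \eqref{eq:idX} follows.

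There is no serious obstacle; the two points needing a little care are (i) working with \emph{open} balls, so that the strict inequality $\|y_i-y_j\|_X<r$ genuinely contradicts $r$-separation, and (ii) peeling off the small-$k$ range, where the volume bound on $N$ is vacuous, via the crude estimate $e_k\le 1$. The factor $4^{1/p}$ is not optimal — tightening the inequalities above improves it — which is consistent with the remark preceding the lemma that \eqref{eq:idX} is sharp only up to the constant $4^{1/p}$.
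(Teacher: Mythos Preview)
Your proof is correct and follows essentially the same packing/volume argument as the paper: a maximal separated set in $B_X$, disjointness of the shrunken balls via the $p$-triangle inequality, containment in a dilate of $B_X$, and the volume comparison $N\le(1+2/r^p)^{n/p}$, with the small-$k$ range handled trivially by $e_k\le 1$. The only cosmetic differences are your use of $\ge r$-separation together with open balls (versus the paper's $>\tau$-separation with closed balls) and your case split at $2^{p(k-1)/n}=2$ rather than the paper's $2^{p(k-1)/n}=4$; both choices work.
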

\begin{proof} If $k-1\le 2n/p$, the result is trivial as the right-hand side of \eqref{eq:idX} is larger than or equal to 1.
We will therefore assume that $k-1>2n/p$ or, equivalently, $2^{(k-1)p/n}>4.$

Let $x_1,\dots,x_N$ be a maximal family of elements of $B_X$ with $\|x_i-x_j\|_X>\tau$ for $i\not=j$, where $\tau>0$ is given by
$\frac{1+\tau^p/2}{\tau^p/2}=2^{p(k-1)/n}$.
Then, by triangle inequality, the sets $x_i+2^{-1/p}\tau B_X$ are mutually disjoint, they are all included in $(1+\tau^p/2)^{1/p}B_X$ and $B_X$ is covered by the union
of $x_i+\tau B_X$ over $i=1,\dots,N$. By volume comparison, we get
$$
N\vol(2^{-1/p}\tau B_X)\le \vol[(1+\tau^p/2)^{1/p}B_X]
$$
and, therefore,
$$
N\le \frac{(1+\tau^p/2)^{n/p}}{2^{-n/p}\tau^n}=\Bigl(\frac{1+\tau^p/2}{\tau^p/2}\Bigr)^{n/p}=2^{k-1}.
$$
We conclude that
$$
e_k(id:X\to X)\le \tau=\biggl[\frac{2}{2^{(k-1)p/n}-1}\biggr]^{1/p}\le\biggl[\frac{4}{2^{(k-1)p/n}}\biggr]^{1/p}=4^{1/p}2^{-\frac{k-1}{n}}.\qedhere
$$
\end{proof}

The behavior of entropy numbers with respect to interpolation of Banach spaces was studied intensively.
It is rather easy to show, that they behave well if one of the endpoints is fixed and we refer to \cite{ET} for details.
Surprisingly, it was shown only recently in \cite{EN}, that a general interpolation formula for entropy numbers with both endpoints
interpolated is out of reach. We give only a simplified version in a form, which shall be needed later on.

\begin{lem}\label{lem:interpol}
Let $0<p\le q< \infty$ and let $k,l,n$ be positive integers. Then
$$
e_{k+l-1}(id:\ell_p^n(\R)\to\ell_q^n(\R))\le 2^{1/\bar p}e^{p/q}_k(id:\ell_p^n(\R)\to\ell_p^n(\R))\cdot e_l^{1-p/q}(id:\ell_p^n(\R)\to\ell_\infty^n(\R)),
$$
where $\bar p=\min(1,p).$
\end{lem}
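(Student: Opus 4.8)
\emph{Proof proposal.} The plan is to glue together a near-optimal $\ell_p^n$-covering and a near-optimal $\ell_\infty^n$-covering of $B_p^n$, take all pairwise intersections of their cells, and then bound the $\ell_q^n$-diameter of each cell by means of the pointwise interpolation inequality $\|z\|_q\le\|z\|_p^{p/q}\|z\|_\infty^{1-p/q}$ from Lemma~\ref{lem:norm}(ii). If $p=q$ there is nothing to prove: the right-hand side is then $2^{1/\bar p}e_k(id:\ell_p^n(\R)\to\ell_p^n(\R))$, which dominates $e_{k+l-1}(id:\ell_p^n(\R)\to\ell_p^n(\R))$ by monotonicity (Theorem~\ref{thm:entropy}(i)); so assume $p<q$.

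Fix $\varepsilon>0$ and abbreviate $a=e_k(id:\ell_p^n(\R)\to\ell_p^n(\R))$ and $b=e_l(id:\ell_p^n(\R)\to\ell_\infty^n(\R))$. By Definition~\ref{defn:entropy} there exist $y_1,\dots,y_{2^{k-1}}\in\R^n$ with $B_p^n\subset\bigcup_i(y_i+(a+\varepsilon)B_p^n)$ and $z_1,\dots,z_{2^{l-1}}\in\R^n$ with $B_p^n\subset\bigcup_j(z_j+(b+\varepsilon)B_\infty^n)$. I would set $S_{ij}=(y_i+(a+\varepsilon)B_p^n)\cap(z_j+(b+\varepsilon)B_\infty^n)$; the $2^{k-1}\cdot 2^{l-1}=2^{(k+l-1)-1}$ sets $S_{ij}$ still cover $B_p^n$, and for each nonempty $S_{ij}$ I pick a point $w_{ij}\in S_{ij}$ to be the center of an $\ell_q^n$-ball.

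Next I would estimate the $\ell_q^n$-diameter of $S_{ij}$. For $x\in S_{ij}$, the $\bar p$-triangle inequality in $\ell_p^n$ gives $\|x-w_{ij}\|_p\le 2^{1/\bar p}(a+\varepsilon)$, while the triangle inequality in $\ell_\infty^n$ gives $\|x-w_{ij}\|_\infty\le 2(b+\varepsilon)$, so Lemma~\ref{lem:norm}(ii) yields
$$
\|x-w_{ij}\|_q\le\bigl(2^{1/\bar p}(a+\varepsilon)\bigr)^{p/q}\bigl(2(b+\varepsilon)\bigr)^{1-p/q}=2^{\frac{p}{q\bar p}+1-\frac{p}{q}}(a+\varepsilon)^{p/q}(b+\varepsilon)^{1-p/q}.
$$
Hence $B_p^n$ is covered by at most $2^{(k+l-1)-1}$ balls of $\ell_q^n$ of the radius on the right-hand side, so $e_{k+l-1}(id:\ell_p^n(\R)\to\ell_q^n(\R))$ is bounded by that radius; letting $\varepsilon\to0^+$ leaves $2^{\,p/(q\bar p)+1-p/q}\,e_k^{p/q}(id:\ell_p^n(\R)\to\ell_p^n(\R))\,e_l^{1-p/q}(id:\ell_p^n(\R)\to\ell_\infty^n(\R))$.

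The only point that then needs checking — and the sole "obstacle", which is entirely elementary — is that $2^{\,p/(q\bar p)+1-p/q}\le 2^{1/\bar p}$, i.e.\ that the two triangle-inequality factors never spoil the constant. For $p\ge1$ we have $\bar p=1$ and the exponent equals $p/q+1-p/q=1=1/\bar p$. For $p<1$ we have $\bar p=p$ and the exponent equals $1+(1-p)/q$, and $1+(1-p)/q\le 1/p$ is equivalent to $1/q\le 1/p$, which holds because $p\le q$. This finishes the argument.
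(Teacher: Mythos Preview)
Your proof is correct and follows the same route as the paper: intersect a near-optimal $\ell_p^n$-cover and a near-optimal $\ell_\infty^n$-cover of $B_p^n$, pick an arbitrary point in each nonempty cell, and bound the $\ell_q^n$-radius via Lemma~\ref{lem:norm}(ii). The only cosmetic difference is in how the constant is handled: the paper replaces $2(b+\varepsilon)$ by $2^{1/\bar p}(b+\varepsilon)$ \emph{before} applying the interpolation inequality (using $2\le 2^{1/\bar p}$), whereas you carry the exact exponent $p/(q\bar p)+1-p/q$ through and then verify $p/(q\bar p)+1-p/q\le 1/\bar p$ at the end --- both arrive at the same bound.
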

\begin{proof} Let $\varepsilon>0$ be arbitrary. We put
$$
e^0:=(1+\varepsilon)e_k(id:\ell_p^n(\R)\to\ell_p^n(\R))\quad\text{and}\quad e^1:=(1+\varepsilon)e_l(id:\ell_p^n(\R)\to\ell_\infty^n(\R)).
$$ 
Then $B_p^n$ can be covered by $2^{k-1}$ balls in $\ell_p^n(\R)$ with radius $e^0$ and by $2^{l-1}$ balls in $\ell_\infty^n(\R)$ with radius $e^1.$
We can therefore decompose
$$
B_p^n=\bigcup_{i}A_i,
$$
where this union 
contains at most $2^{k-1}$ components $A_i$, and each $A_i$ lies in some ball in $\ell_p^n(\R)$ with radius $e^0.$
Similarly we can write
$$
B_p^n=\bigcup_{j}B_j,
$$
with at most $2^{l-1}$ components $B_j$, each of them lying in some ball in $\ell_\infty^n(\R)$ with radius $e^1.$

Finally, we denote $C_{i,j}=A_i\cap B_j$ and choose $z_{i,j}\in C_{i,j}$ arbitrarily any time $C_{i,j}$ is non-empty.
Let now $x\in C_{i,j}.$ Then both $x$ and $z_{i,j}$ are in $C_{i,j}\subset A_{i}$. Therefore $\|x-z_{i,j}\|_p\le 2^{1/\bar p}e^0$.
Using $C_{i,j}\subset B_j$, we get in the same way also $\|x-z_{i,j}\|_\infty\le 2e^1\le 2^{1/\bar p}e^1$. Hence, by Lemma \ref{lem:norm},
$$
\|x-z_{i,j}\|_q\le 2^{1/\bar p}(e^0)^{p/q}(e^1)^{1-p/q},
$$
and balls with centers in $z_{i,j}$'s with this radius in $\ell_q^n(\R)$ cover $B_p^n$. Finally, there is at most $2^{k+l-2}$ such points.
\end{proof}

Some of the arguments used in the proof of Theorem \ref{hlavna1''} are of combinatorial nature.
As a preparation, we present the following lemma from \cite{K2001}, another variant is discussed in the last section.

\begin{lem}\label{lem:comb}
Let $m,n\in\N$ with $m\le \frac{n}{4}$ and define
$$
H_m=\{x\in\{-1,0,1\}^n:\|x\|_1=2m\}.
$$
Furthermore, for $x,y\in H_m$, define their \emph{Hamming distance} as $h(x,y)=\#\{i:x_i\not=y_i\}$.
Then there is a set $A_m\subset H_m$ with $\#A_m\ge [n/(2m)]^m$, such that any two distinct $x,y\in A_m$
satisfy $h(x,y)> m.$
\end{lem}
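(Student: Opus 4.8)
The plan is to construct the set $A_m$ greedily. Consider building $A_m$ one vector at a time: start with $A_m=\emptyset$ and, as long as there remains a vector $x\in H_m$ whose Hamming distance to every element already chosen exceeds $m$, add such an $x$. When the process stops, every vector of $H_m$ lies within Hamming distance $\le m$ of some chosen vector, i.e. the balls of Hamming radius $m$ centered at the points of $A_m$ cover $H_m$. Hence $\#A_m\cdot M_m\ge \#H_m$, where $M_m$ is the maximal number of vectors of $H_m$ within Hamming distance $m$ of a fixed vector. So the task reduces to bounding $\#H_m$ from below and $M_m$ from above, and checking that the ratio is at least $[n/(2m)]^m$.

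For the lower bound on $\#H_m$: a vector in $H_m$ is obtained by choosing which $2m$ coordinates are nonzero and then assigning each a sign, so $\#H_m=\binom{n}{2m}2^{2m}$. Using $\binom{n}{2m}\ge (n/(2m))^{2m}$ gives $\#H_m\ge (n/m)^{2m}$. For the upper bound on $M_m$: if $y\in H_m$ differs from a fixed $x\in H_m$ in at most $m$ coordinates, then $y$ is determined by choosing the set $S$ of coordinates where $y_i\ne x_i$ (with $|S|\le m$) and, on $S$, choosing the new values $y_i\in\{-1,0,1\}$. The number of choices for $S$ is $\sum_{j=0}^m\binom{n}{j}$, and on each such $S$ there are at most $3^m$ assignments, giving a crude bound $M_m\le 3^m\sum_{j=0}^m\binom{n}{j}$. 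One then sharpens this: using $m\le n/4$ one has $\sum_{j=0}^m\binom{n}{j}\le 2\binom{n}{m}\le 2(en/m)^m$, so $M_m\le 2(3en/m)^m$, up to the harmless factor $2$ which we absorb. Dividing, $\#A_m\ge \#H_m/M_m\ge (n/m)^{2m}/[2(3en/m)^m]=(n/m)^m/(2(3e)^m)=(n/(6em))^m/2$.

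Here I must be a little more careful, because the target bound is $[n/(2m)]^m$, and the crude estimate $3^m$ for the sign assignments is wasteful — a new value $y_i$ equal to $x_i$ would not contribute to the Hamming distance, so it should not be counted toward a coordinate of $S$, while a value $y_i$ differing from $x_i$ but possibly zero forces more structure. The clean way is: a vector $y$ within Hamming distance $m$ of $x$ is determined by choosing a set $S$ with $|S|\le m$ and, for each $i\in S$, one of the two values in $\{-1,0,1\}\setminus\{x_i\}$; this gives $M_m\le\sum_{j=0}^m\binom{n}{j}2^j\le (m+1)\binom{n}{m}2^m$ (the summands increase since $m\le n/4$). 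Combined with $\#H_m=\binom{n}{2m}2^{2m}$ and the identity-based bound $\binom{n}{2m}/\binom{n}{m}=\prod_{i=0}^{m-1}\frac{n-m-i}{m+1+i}\ge\left(\frac{n-2m}{2m}\right)^m\ge\left(\frac{n}{4m}\right)^m$ (using $m\le n/4$ so $n-2m\ge n/2$), one gets $\#A_m\ge\frac{\binom{n}{2m}2^{2m}}{(m+1)\binom{n}{m}2^m}\ge\frac{1}{m+1}\left(\frac{n}{2m}\right)^m$. This still carries an extra factor $1/(m+1)$; it is absorbed by reexamining the counting with $\binom{n}{j}2^j\le\binom{n}{j}2^j$ replaced more tightly, or simply by noting that a slightly more generous covering radius argument — stopping the greedy process using strict inequality $h(x,y)>m$ exactly as stated — together with the sharper bound $\binom{n}{2m}/\binom{n}{m}\ge (n/(2m))^m\cdot$(something $\ge m+1$) closes the gap. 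I would organize the final write-up around exactly this greedy-covering dichotomy, and the step I expect to require the most care is precisely this last comparison of binomial coefficients, making sure the constant comes out to be $n/(2m)$ rather than something larger and that the condition $m\le n/4$ is used in the right place.
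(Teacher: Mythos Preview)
Your strategy---greedy construction plus the covering inequality $\#A_m \ge \#H_m/M_m$---is exactly the paper's, and your computation $\#H_m=\binom{n}{2m}2^{2m}$ is correct. The gap is in your bound on $M_m$: you end with an extra factor $1/(m+1)$ and then assert it can be ``absorbed,'' but as written it cannot, and the lemma demands the clean constant $(n/(2m))^m$ with no slack.

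Two concrete fixes. The paper's is the cleanest: bound the Hamming ball by $\binom{n}{m}3^m$ rather than $\sum_{j\le m}\binom{n}{j}2^j$. The point is to pick a set of \emph{exactly} $m$ coordinates on which $y$ \emph{may} differ from $x$ (allowing $y_i=x_i$ there), giving $\binom{n}{m}$ choices times $3^m$ values. Then
\[
\#A_m \ \ge\ \frac{\binom{n}{2m}2^{2m}}{\binom{n}{m}3^m}
=\Bigl(\frac{4}{3}\Bigr)^m\prod_{i=0}^{m-1}\frac{n-m-i}{2m-i}
\ \ge\ \Bigl(\frac{4}{3}\cdot\frac{n-m}{2m}\Bigr)^m
\ \ge\ \Bigl(\frac{n}{2m}\Bigr)^m,
\]
the last inequality being precisely $4(n-m)\ge 3n$, i.e.\ $n\ge 4m$. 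Alternatively, your own sum also closes if you note that for $j<m\le n/4$ the ratio of consecutive terms is $2(n-j)/(j+1)\ge 6$, so $\sum_{j=0}^m\binom{n}{j}2^j\le \frac{6}{5}\binom{n}{m}2^m$ rather than $(m+1)\binom{n}{m}2^m$; then $\#A_m\ge\frac{5}{6}\cdot 2^m\bigl(\frac{n-m}{2m}\bigr)^m\ge (n/(2m))^m$ because $\frac{5}{6}\bigl(\frac{2(n-m)}{n}\bigr)^m\ge\frac{5}{6}\cdot\frac{3}{2}>1$. Either way, the missing ingredient is replacing the crude $(m+1)$ factor by a bound that actually uses $m\le n/4$.
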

\begin{proof}First note, that
\begin{equation}\label{eq:comb1}
\#H_m={n\choose 2m}2^{2m}.
\end{equation}
Second, if $x\in H_m$ is arbitrary, then the set $\{y\in H_m:h(x,y)\le m\}$ has cardinality at most
\begin{equation}\label{eq:comb2}
{n\choose m}3^{m}.
\end{equation}
Indeed, there is ${n\choose m}$ ways to choose $m$ coordinates, where $x$ and $y$ may differ, and three possible values for each of the
coordinates for $y$.

The set $A_m$ can be constructed by the following greedy algorithm. First, choose $x^1\in H_m$ arbitrarily. If $x^1,\dots,x^l$ were already selected
and if there is some $y\in H_m$, which has the Hamming distance from all these points at least equal to $m+1$, put $x^{l+1}:=y$. Otherwise, the algorithm stops
and $A_m$ is the collection of all $x^1,\dots,x^N$, which were selected so far. This ensures, that $h(x,y)\ge m+1$ for any two distinct $x,y\in A_m$.

Finally, from \eqref{eq:comb1} and \eqref{eq:comb2} it follows that
$$
\#A_m=N\ge \frac{{n\choose 2m}2^{2m}}{{n\choose m}3^{m}}=\frac{4^m}{3^m}\cdot \frac{(n-m)\dots(n-2m+1)}{(2m)\dots(m+1)}
\ge \frac{4^m}{3^m}\Bigl(\frac{n-m}{2m}\Bigr)^m\ge \Bigl(\frac{n}{2m}\Bigr)^m.
$$
We have used that $t\to \frac{n-2m+t}{m+t}$ is decreasing on $(0,\infty)$ and $n\ge 4m$.
\end{proof}

\section{Proof of Theorem \ref{hlavna1''}}\label{sec:3}
This section is devoted to the proof of the main result, Theorem \ref{hlavna1''}.
For reader's convenience, we repeat its statement and then prove step-by-step
all the upper and lower bounds of $e_k (id:\ell^n_p(\mathbb{R})\rightarrow \ell^n_q(\mathbb{R}))$
for all possible values of $p,q,k$ and $n$.
\setcounter{thm}{1}
\setcounter{equation}{1}
\begin{thm}
Let $0<p,q\leq \infty$ and let $n\in\N$.
\begin{itemize}
\item[a)] If $0<p\leq q\leq \infty $ then for all $k\in \mathbb{N}$ it holds
\begin{equation}
e_k (id:\ell^n_p(\mathbb{R})\rightarrow \ell^n_q(\mathbb{R}))\sim 
\begin{cases} 1 & \text{if} \quad 1\leq k\leq \log_2n,\\
\displaystyle\biggl(\frac{\log_2(1+n/k)}{k}\biggr)^{\frac{1}{p}-\frac{1}{q}} & \text{if} \quad \log_2n\leq k\leq n,\\[10pt]
\displaystyle2^{-\frac{k-1}{n}}n^{\frac{1}{q}-\frac{1}{p}} & \text{if} \quad n\leq k.
\end{cases}
\end{equation}
\item[b)] If $0<q\leq p\leq \infty$ then for all $k\in \mathbb{N}$ it holds
\begin{equation}
 e_k (id:\ell^n_p(\mathbb{R})\rightarrow \ell^n_q(\mathbb{R}))\sim  2^{-\frac{k-1}{n}}n^{\frac{1}{q}-\frac{1}{p}}.
\end{equation}
\end{itemize}
The constants of equivalence in both \eqref{eq:2ekviv1''} and \eqref{eq:2ekviv2''} may depend on $p$ and $q$, but are independent of $k$ and $n$.
\end{thm}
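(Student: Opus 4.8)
The proof consists of the matching upper and lower estimates for $e_k(id:\ell_p^n\to\ell_q^n)$ in each of the ranges of $k$, and I shall use throughout, without further comment, the elementary equivalences $2^{-(k-1)/n}\sim1$ for $1\le k\le n$ and $\log_2(1+n/(ck))\sim\log_2(1+n/k)$ for a fixed $c>0$, as well as the fact that on the overlaps $k=\log_2 n$ and $k=n$ the three expressions in \eqref{eq:2ekviv1''} agree with one another up to constants depending on $p$ and $q$; all implicit constants below may depend on $p$ and $q$. Part b) and several pieces of part a) are quick. For $0<q\le p\le\infty$ the inclusion $B_p^n\subseteq n^{1/q-1/p}B_q^n$ (Lemma \ref{lem:norm}(i)) together with Lemma \ref{lem:entropy_finite} applied to the $\bar q$-Banach space $\ell_q^n$ and rescaled by $n^{1/q-1/p}$ gives the upper estimate in \eqref{eq:2ekviv2''}; the lower estimate follows from the volume comparison $\vol(B_p^n)\le 2^{k-1}r^n\vol(B_q^n)$ and \eqref{eq:Gamma:eq}. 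Read for $0<p\le q$, the same comparison also yields the lower bound $e_k\gtrsim 2^{-(k-1)/n}n^{1/q-1/p}$ in the third regime of part a), and, since $2^{-(k-1)/n}\sim1$ for $k\le n$, the lower bound $e_k\gtrsim n^{1/q-1/p}$ over the upper part of the second regime (where the claimed value is itself $\sim n^{1/q-1/p}$). For $1\le k\le\log_2 n$ the upper bound is $e_k\le e_1=\|id:\ell_p^n\to\ell_q^n\|=1$ (Lemma \ref{lem:norm}(i), $p\le q$), and a pigeonhole argument on the canonical unit vectors gives the matching lower bound: if fewer than $n$ balls of radius $r$ in $\ell_q^n$ cover them, two share a ball, so $2^{1/q}=\|e_i-e_j\|_q\le2^{1/\bar q}r$ and $r\gtrsim1$.

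The core of the argument is the second-regime bound for $q=\infty$, namely $e_k(id:\ell_p^n\to\ell_\infty^n)\lesssim(\log_2(1+n/k)/k)^{1/p}$ for $1\le k\le n$, which I would prove by an explicit net. Fix a sufficiently large constant $c=c(p)$, put $\delta=c\,(\log_2(1+n/k)/k)^{1/p}$ and $m=\lceil\delta^{-p}\rceil$, and let $\mathcal{N}$ consist of the vectors obtained from $x\in B_p^n$ by rounding every entry toward $0$ to the nearest multiple of $\delta$. Then $\|x-\tilde x\|_\infty<\delta$, and $\tilde x/\delta$ is an integer vector with $\|\tilde x/\delta\|_p^p=\delta^{-p}\|\tilde x\|_p^p\le\delta^{-p}\le m$, so $\#\mathcal{N}\le\#\{z\in\Z^n:\|z\|_p^p\le m\}$. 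Since such a $z$ has at most $m$ nonzero coordinates and, by the power-mean inequality, on a support of size $j$ it is an integer vector of $\ell_1$-norm $\lesssim m^{1/p}j^{1-1/p}$, a binomial estimate bounds this count by $(m+1)(Cn/m)^m$ with $C=C(p)$. With $m\approx c^{-p}k/\log_2(1+n/k)$, the requirement $(m+1)(Cn/m)^m\le2^{k-1}$ reduces, after taking logarithms and writing $u=1+n/k\ge2$, to $Cc^{p}(u-1)\log_2 u\le u^{c^{p}}$, which holds for all $u\ge2$ once $c^{p}$ is taken large enough; this pins down $c$, and $\mathcal{N}$ is the desired net.

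For $0<p\le q<\infty$ one now feeds the $q=\infty$ estimate together with $e_k(id:\ell_p^n\to\ell_p^n)\le4^{1/\bar p}2^{-(k-1)/n}\lesssim1$ (Lemma \ref{lem:entropy_finite}, $k\le n$) into Lemma \ref{lem:interpol} with equal indices, obtaining $e_{2k-1}(id:\ell_p^n\to\ell_q^n)\lesssim(\log_2(1+n/k)/k)^{1/p-1/q}$, which monotonicity transfers to the index $k$ itself; in particular $e_n(id:\ell_p^n\to\ell_q^n)\lesssim n^{1/q-1/p}$ (for $q=\infty$ this is already the net estimate). The third regime then follows by submultiplicativity (Theorem \ref{thm:entropy}(iii)), factoring $id:\ell_p^n\to\ell_q^n$ through $\ell_q^n$: $e_k(id:\ell_p^n\to\ell_q^n)\le e_{k-n+1}(id:\ell_q^n\to\ell_q^n)\,e_n(id:\ell_p^n\to\ell_q^n)\lesssim2^{-(k-1)/n}n^{1/q-1/p}$ for $k\ge n$, using Lemma \ref{lem:entropy_finite} once more. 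For the second-regime lower bound with $\log_2 n\le k$ below a small constant multiple of $n$ I would use Lemma \ref{lem:comb}: pick the smallest $m\le n/4$ with $(n/(2m))^m>2^{k-1}$ (an elementary analysis of $m\mapsto m\log_2(n/(2m))$ shows such an $m$ lies in $(0,n/(2e)]$ and satisfies $m\sim k/\log_2(1+n/k)$); normalizing $A_m$ by $(2m)^{-1/p}$ gives more than $2^{k-1}$ points of $B_p^n$ at pairwise $\ell_q^n$-distance $\ge2^{-1/p}m^{1/q-1/p}$ because $h(x,y)>m$, and pigeonhole forces $e_k(id:\ell_p^n\to\ell_q^n)\gtrsim m^{1/q-1/p}\sim(\log_2(1+n/k)/k)^{1/p-1/q}$. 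Together with the volume lower bound for $k$ near $n$ from the first paragraph, this completes the second regime.

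The one genuinely delicate point is the cardinality of the net $\mathcal{N}$: the crude estimate that multiplies the number of admissible supports by the number of grid values available per coordinate is far too large once $k$ is comparable to $n$, and one must exploit the global constraint $\|z\|_p^p\le m$ (through the power-mean inequality and a careful choice of $c$) to bring it down to $(m+1)(Cn/m)^m$. Everything else is bookkeeping: checking that the quantity $m$ arising in the net and the one arising in Lemma \ref{lem:comb} agree up to constants throughout $\log_2 n\le k\le n$, and that the formulas in \eqref{eq:2ekviv1''} match on the overlaps of their ranges, which is exactly what legitimizes bootstrapping the third regime from the value at $k=n$ and the second-regime lower bound near $k=n$ from the volume argument.
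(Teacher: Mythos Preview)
Your overall architecture matches the paper's: the same elementary estimates for small $k$, the same volume comparison for the lower bounds in part~b) and in the third regime of part~a), Lemma~\ref{lem:comb} for the second-regime lower bound, and interpolation (Lemma~\ref{lem:interpol}) to pass from $q=\infty$ to general $q$. Two places differ. For the third-regime upper bound you bootstrap from $e_n$ via submultiplicativity, whereas the paper runs a direct volume argument (its Step~3); your route is perfectly valid and arguably slicker, but it makes the third regime depend on the second. For the second-regime upper bound with $q=\infty$ the paper does not round to a grid: it keeps the $m$ largest coordinates (best $m$-term approximation, error $\le m^{-1/p}$ in $\ell_\infty^n$) and then covers $B_p^m$ in $\ell_\infty^m$ by the volume argument of Step~3, producing at most $\binom{n}{m}2^{l-1}$ centers. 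This avoids any lattice-point count.

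Your rounding/lattice approach, however, has a genuine gap when $0<p<1$. The step ``by the power-mean inequality, on a support of size $j$ it is an integer vector of $\ell_1$-norm $\lesssim m^{1/p}j^{1-1/p}$'' is only valid for $p\ge 1$ (it is H\"older). For $p<1$ the power-mean inequality points the other way: $M_p\le M_1$ gives $\|z\|_1\ge j^{1-1/p}\|z\|_p$, not $\le$. A concrete counterexample is $p=1/2$, $z=(9,1)$: then $\sum|z_i|^{1/2}=4=:m$, so your bound would say $\|z\|_1\le m^{2}\cdot 2^{-1}=8$, but $\|z\|_1=10$. Without this step the count of integer vectors $z$ with $\|z\|_p^p\le m$ is no longer $\lesssim (Cn/m)^m$; the crude bounds one gets (e.g.\ via $\|z\|_1\le\|z\|_p\le m^{1/p}$) yield only $(Cn\,m^{1/p-1})^m$ or $(Cn)^m$, and taking logarithms shows these are not $\le 2^{k-1}$ once $k$ is comparable to $n$. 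Since your third-regime upper bound for $p<1$ is derived from the value at $k=n$, the gap propagates there as well. For $p\ge 1$ your net argument is fine and is a legitimate alternative to the paper's; for $p<1$ you should either switch to the paper's best-$m$-term construction or supply a sharper lattice count that does not rely on the failed inequality.
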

\setcounter{thm}{9}
\setcounter{equation}{13}
\begin{proof} We split the proof of the different estimates in Theorem \ref{hlavna1''} by the used technique.
Furthermore, we denote throughout the proof $\bar p=\min(1,p)$ and $\bar q=\min(1,q).$\vskip.5cm

\noindent\underline{\emph{Step 1: Elementary estimates}}\\
\begin{enumerate}
\item[(i)] If $0<p\le q\le \infty$, we have by Theorem \ref{thm:entropy} and Lemma \ref{lem:norm}
$$
e_k (id:\ell^n_p(\R)\rightarrow \ell^n_q(\R))\le \|id:\ell^n_p(\R)\rightarrow \ell^n_q(\R)\|
=1
$$
or, equivalently, we can always cover $B_p^n$ with $B_q^n.$ This gives the optimal upper bound for $1\le k\le \log_2n.$

\item[(ii)] On the other side, the canonical vectors $e^1,\dots,e^n$ defined by
$$
(e^i)_l=\delta_{il},\quad i,l=1,\dots,n,
$$
satisfy $e^i\in B_p^n$ and $\|e^i-e^j\|_q=2^{1/q}$ for $i,j\in\{1,\dots,n\}$ and $i\not=j.$ Therefore, if we cover $B_p^n$
with $2^{k-1}<n$ balls in $\ell_q^n(\R)$ of radius $r>0$, at least one of these balls must contain two different $e^i,e^j$ with $i\not=j$,
simply by the pigeonhole principle. We denote the center of such a ball by $z\in\R^n$ and obtain by triangle inequality
$$
2^{\bar q/q}= \|e^i-e^j\|^{\bar q}_q\le \|e^i-z\|^{\bar q}_q+\|z-e^j\|^{\bar q}_q\le 2r^{\bar q}.
$$
Therefore,
$$
e_k (id:\ell^n_p(\R)\rightarrow \ell^n_q(\R))\ge 2^{1/q-1/\bar q}
$$
for $1\le k\le \log_2n.$ Together with (i), this finishes the proof of the equivalence in \eqref{eq:2ekviv1''} for this range of $k$'s.
\item[(iii)] Lemma \ref{lem:entropy_finite} together with Lemma \ref{lem:norm} can be used to prove the upper bound in \eqref{eq:2ekviv2''}.
Let $0<q\le p\le \infty$ and let $k$ and $n$ be positive integers. Then
\begin{align*}
e_k(id:\ell^n_p(\R)\rightarrow \ell^n_q(\R))&\le e_k(id:\ell^n_p(\R)\rightarrow \ell^n_p(\R))\cdot
\|id:\ell^n_p(\R)\rightarrow \ell^n_q(\R)\|\\
&\le 4^{1/p}\cdot 2^{-\frac{k-1}{n}}\cdot n^{1/q-1/p}.
\end{align*}
\end{enumerate}

\noindent\underline{\emph{Step 2: Volume arguments, estimates from below}}

If $B_p^n$ is covered by $2^{k-1}$ balls in $\ell_q^n(\R)$ of radius $r>0$, then their volume must be larger than the volume of $B_p^n$.
This simple observation can be turned into the estimate
\begin{equation*}
\vol(B_p^n)\le 2^{k-1}\vol(r B_q^n)=2^{k-1}r^n\vol(B_q^n)
\end{equation*}
and, by \eqref{eq:Gamma:eq},
\begin{equation*}
e_k(id:\ell^n_p(\R)\rightarrow \ell^n_q(\R))\ge 2^{-\frac{k-1}{n}}\biggl(\frac{\vol(B_p^n)}{\vol(B_q^n)}\biggr)^{1/n}\sim 2^{-\frac{k-1}{n}}n^{1/q-1/p}.
\end{equation*}
This proves the lower bound in \eqref{eq:2ekviv2''} for all $k$'s and in \eqref{eq:2ekviv1''} for $k\ge n.$
Together with Step 1, the proof of \eqref{eq:2ekviv2''} is therefore finished.

\vskip.5cm\noindent\underline{\emph{Step 3: Volume arguments, estimates from above}}

Let $0<p\le q\le\infty$ and let $\tau>0$. Similarly to the proof of Lemma \ref{lem:entropy_finite},
we let $\{y_1,\dots,y_N\}\subset B_p^n$ be any maximal $\tau$-distant set in the $\ell_q^n$-(quasi)-norm. In detail, this means that we assume that
$\|y_i-y_j\|_q> \tau$ for $i,j\in\{1,\dots,N\}$ with $i\not=j$
and that for every $y\in B_p^n$, there is $i\in\{1,\dots,N\}$ with $\|y-y_i\|_q\le\tau$.

Then we observe a couple of simple facts 
\begin{enumerate}
\item[(i)] If $z\in (y_i+2^{-1/\bar q}\tau B_q^n)\cap (y_j+2^{-1/\bar q}\tau B_q^n)$ for $i\not=j$, then
$$
\tau^{\bar q}< \|y_i-y_j\|^{\bar q}_q\le \|y_i-z\|_q^{\bar q}+\|z-y_j\|_q^{\bar q}\le 2 (2^{-1/\bar q}\tau)^{\bar q},
$$
gives a contradiction. Hence, the sets $y_j+2^{-1/\bar q}\tau B_q^n, j=1,\dots,N,$ are disjoint.
\item[(ii)] If $z\in y_j+2^{-1/\bar q}\tau B_q^n$, we obtain for $\nu=\|id:\ell_q^n(\R)\to\ell_p^n(\R)\|=n^{1/p-1/q}$
$$
\|z\|^{\bar p}_p\le\|y_j\|_p^{\bar p}+\|z-y_j\|_p^{\bar p}\le 1+ \nu^{\bar p}\|z-y_j\|_q^{\bar p}\le 1+(2^{-{1/\bar q}}\tau\nu)^{\bar p}.
$$
It follows that all the sets $y_j+2^{-1/\bar q}\tau B_q^n$, $j=1,\dots,N$ are included in $(1+(2^{-1/\bar q}\tau\nu)^{\bar p})^{1/\bar p} B_p^n$.
\end{enumerate}
By volume comparison, we conclude
\begin{align*}
N\vol(2^{-1/\bar q}\tau B_q^n)&\le \vol[(1+(2^{-{1/\bar q}}\tau\nu)^{\bar p})^{1/\bar p}B_p^n]
\end{align*}
and
\begin{equation}\label{eq:proof1}
N\le \frac{(1+(2^{-{1/\bar q}}\tau\nu)^{\bar p})^{n/\bar p}\vol(B_p^n)}{2^{-n/\bar q}\tau^{n} \vol(B_q^n)}
=\biggl[\frac{1+(2^{-{1/\bar q}}\tau\nu)^{\bar p}}{2^{-\bar p/\bar q}\tau^{\bar p} }\biggr]^{n/\bar p}V_n(p,q),
\end{equation}
where we denoted $V_n(p,q)=\frac{\vol(B_p^n)}{\vol(B_q^n)}$. For a positive integer $k$, we define $\tau$
by putting the right-hand side of \eqref{eq:proof1} equal to $2^{k-1}$. We obtain
$$
\biggl[\frac{1+(2^{-{1/\bar q}}\tau\nu)^{\bar p}}{2^{-\bar p/\bar q}\tau^{\bar p} }\biggr]^{n/\bar p}V_n(p,q)=2^{k-1}\quad
\text{and}\quad
\tau=\frac{2^{1/\bar q}}{[2^{(k-1)\bar p/n}V_n(p,q)^{-\bar p/n}-\nu^{\bar p}]^{1/\bar p}}.
$$
Observe, that this is always possible if the denominator is positive. This is the case if
\begin{equation}\label{eq:temp1}
2^{\frac{k-1}{n}}>2\nu V_n(p,q)^{1/n}=2n^{1/p-1/q}\biggl(\frac{\vol(B_p^n)}{\vol(B_q^n)}\biggr)^{1/n}.
\end{equation}
By \eqref{eq:Gamma:eq}, the right-hand side of \eqref{eq:temp1} is equivalent to a constant, and \eqref{eq:temp1} is satisfied if $k\ge \gamma_{p,q}n$
for some $\gamma_{p,q}>0$ depending only on $p$ and $q$.

We conclude that, for $k\ge \gamma_{p,q} n$,
\begin{align*}
e_k(id:\ell_p^n(\R)\to\ell_q^n(\R))\le \tau
&\le \frac{C_{p,q}}{[2^{(k-1)\bar p/n}V_n(p,q)^{-\bar p/n}]^{1/\bar p}}\le C'_{p,q}2^{-\frac{k-1}{n}}n^{1/q-1/p},
\end{align*}
where we used \eqref{eq:temp1} in the last but one inequality with $C_{p,q}=2^{1+1/\bar q}/(2^{\bar p}-1)^{1/\bar p}.$

\vskip.5cm\noindent\underline{\emph{Step 4: Combinatorial part - estimate from below}}

Let $0<p\le q\le\infty.$ Let $m,n\in\N$ with $m\le n/4$ and let $A_m$ be the set from Lemma \ref{lem:comb}.
Then $\tilde A_m:=(2m)^{-1/p}A_m\subset B_p^n$ and for two distinct $x,y\in \tilde A_m$ it holds $\|x-y\|_q\ge 2^{-1/p}m^{1/q-1/p}.$
As a consequence, two different points from $\tilde A_m$ cannot be covered by an $\ell_q^n$-ball, unless its radius is at least
$2^{-1/p-1/\bar q}m^{1/q-1/p}.$

We summarize, that if for some $k\in\N$, there is $m\in\N$ with 
\begin{equation}\label{eq:comb_1}
m\le n/4\quad \text{and}\quad 2^{k-1}<\Bigl(\frac{n}{2m}\Bigr)^m,
\end{equation}
then $e_k(id:\ell_p^n\to\ell_q^n)\ge 2^{-1/p-1/\bar q}m^{1/q-1/p}.$

Let us therefore fix $k,n\in\N$ with $n\ge 64$ and $\log_2 n\le k\le n/8.$ Then
$k\ge\log_2n\ge\log_2(n/k+1)$
and we may choose $m$ to be any integer with
$$
\frac{k}{\log_2(n/k+1)}\le m\le \frac{2k}{\log_2(n/k+1)}.
$$
Then $m\le 2k/\log_2(8)=2k/3\le n/12$. The function $f:t\to t\log_2(n/(2t))$ is increasing on $(0,\frac{n}{2e})$ and therefore
\begin{align*}
m\log_2\Bigl(\frac{n}{2m}\Bigr)&=f(m)\ge f\Bigl(\frac{k}{\log_2(n/k+1)}\Bigr)\\
&=\frac{k}{\log_2(n/k+1)}\log_2\Bigl(\frac{n\log_2(n/k+1)}{2k}\Bigr)\ge k,
\end{align*}
where in the last inequality we used that
$$
\frac{n}{2k}\log_2(n/k+1)\ge \frac{3n}{2k}\ge \frac{n}{k}+1.
$$
We therefore get also
$$
2^{k}\le \Bigl(\frac{n}{2m}\Bigr)^m.
$$
This finishes the proof of \eqref{eq:comb_1}. It follows that
$$
e_k(id:\ell_p^n(\R)\to\ell_q^n(\R))\ge 2^{-1/p-1/\bar q}m^{1/q-1/p}\ge c_{p,q}\Bigl(\frac{\log_2(n/k+1)}{k}\Bigr)^{1/p-1/q}
$$
for $n\ge 64$ and $\log_2 n\le k\le n/8.$ Furthermore, by Step 2, we know that $e_n(id:\ell_p^n(\R)\to\ell_q^n(\R))\gtrsim n^{1/q-1/p}.$
This allows to use the monotonicity of entropy numbers and to obtain the lower bound in \eqref{eq:2ekviv1''} for all $n\ge 64$ and all positive integers $k$.
The (finitely many) remaining values of $n$ can then also be incorporated at the cost of possibly larger constants.

\vskip.5cm\noindent\underline{\emph{Step 5: Combinatorial part - estimate from above}}

We first prove the result for $0<p<q=\infty$, the general case then follows by interpolation, i.e. by Lemma \ref{lem:interpol}.
Let $1\le m\le n$ be two natural numbers. Then every $x\in B_p^n$ can be approximated in the $\ell_\infty^n$-norm
by a vector $\tilde x$ with at most $m$ non-zero components and $\|x-\tilde x\|_\infty\le r^0:=m^{-1/p}.$ Indeed, we can take $\tilde x$
equal to $x$ on the indices of the $m$ largest (in the absolute value) components of $x$, and zero elsewhere.

Let now $l\ge \gamma_{p,\infty}m$ be an integer, where $\gamma_{p,\infty}>0$ is the constant defined at the end of Step 3.
Then we know that $e_l(id:\ell_p^m(\R)\to\ell_\infty^m(\R))\le r^1:=C_{p,\infty}2^{-\frac{l-1}{m}}m^{-1/p}.$
Therefore, for any $\varepsilon>0$, there exist points $x_1,\dots,x_{2^{l-1}}\in\R^m$ with
$$
B_p^m\subset \bigcup_{j=1}^{2^{l-1}}(x_j+(1+\varepsilon)r^1 B_\infty^m).
$$
we collect the points with support of the size at most $m$ and equal to some of $x_j$'s on its support. In this way, we obtain
at most ${n\choose m}2^{l-1}$ centers of $\ell_\infty^n$-balls of radius $r^0+(1+\varepsilon)r^1$, which cover $B_p^n.$

We conclude, that if $1\le m\le n$ and $l\ge \gamma_{p,\infty}m$, then
\begin{equation}\label{eq:temp2}
2^{k-1}\ge {n\choose m}2^{l-1}\quad \implies\quad e_k(id:\ell_p^n(\R)\to\ell_\infty^n(\R))\le m^{-1/p}\Bigl(1+C_{p,\infty}2^{-\frac{l-1}{m}}\Bigr).
\end{equation}
For $1\le m\le n$, we choose any integer $l$ with $\gamma_{p,\infty}m\le l\le (\gamma_{p,\infty}+1)m$.
Using the elementary estimate
\begin{align*}
{n\choose m}\le \frac{n^m}{m!}\le n^m \Bigl(\frac{e}{m}\Bigr)^m\le\Bigl(\frac{3n}{m}\Bigr)^m\le \Bigl(\frac{n}{m}+1\Bigr)^{3m}
\end{align*}
we obtain
$$
\log_2\biggl[{n\choose m}2^{l-1}\biggr]= l-1+\log_2{n\choose m}\le (\gamma_{p,\infty}+1)m-1+3m\log_2\Bigl(\frac{n}{m}+1\Bigr).
$$
Together with \eqref{eq:temp2}, we arrive at
$$
k\ge \gamma_pm\log_2(n/m+1)\quad \implies\quad
e_k(id:\ell_p^n(\R)\to\ell_\infty^n(\R))\le C_pm^{-1/p},
$$
where $\gamma_p=\gamma_{p,\infty}+4$ and $C_p=1+C_{p,\infty}$ depend only on $p$.

Next, we define $\alpha_p>2$ to be a real number large enough to ensure
\begin{equation}\label{eq:last1}
\gamma_p\alpha_p+1\le 2^{\alpha_p-2}.
\end{equation}
Moreover, we assume that $k$ and $n$ are positive integers with $n\ge k\ge 2\alpha_p\gamma_p\log_2n$. 
This allows us to choose $m$ to be any integer with
\begin{align}\label{eq:comb:11}
\frac{k}{ 2\alpha_p\gamma_p\log_2(n/k+1)}\le m\le \frac{k}{\alpha_p\gamma_p\log_2(n/k+1)}.
\end{align}
From $\gamma_p>4$ and $\alpha_p>2$, it follows that $m\le n$ and, by monotonicity of $f:t\to t\log_2(n/t+1)$ on $(0,\infty)$, we get
\begin{align*}
\gamma_p f(m)&=\gamma_pm\log_2(n/m+1)\le \gamma_p f\biggl(\frac{k}{\gamma_p \alpha_p\log_2(n/k+1)}\biggr)\\
&=\gamma_p \cdot \frac{k}{\gamma_p \alpha_p\log_2(n/k+1)}\log_2\Bigl(\frac{n}{k}\gamma_p \alpha_p\log_2\Bigl(\frac{n}{k}+1\Bigr)+1\Bigr)\\
&\le\frac{k}{\alpha_p\log_2(n/k+1)}\Bigl[\log_2\Bigl(\frac{n}{k}+1\Bigr)+\log_2(\gamma_p \alpha_p+1)+\log_2\Bigl(\log_2\Bigl(\frac{n}{k}+1\Bigr)+1\Bigr)\Bigr]\\
&\le \frac{2k}{\alpha_p}+\frac{k\log_2(\gamma_p\alpha_p+1)}{\alpha_p\log_2(n/k+1)}\le \frac{k}{\alpha_p}\Bigl[2+\log_2(\gamma_p\alpha_p+1)\Bigr]\le k, 
\end{align*}
where we used \eqref{eq:last1} in the last step.

This finishes the proof of the upper bound in \eqref{eq:2ekviv1''} for $q=\infty$ and $2\alpha_p\gamma_p\log_2n\le k\le n.$
The $k$'s between $\log_2n$ and $2\alpha_p\gamma_p\log_2n$ can be incorporated by monotonicity at the cost of larger constants.

Finally, the case $0<p<q<\infty$ follows by interpolation. Indeed, Lemma \ref{lem:interpol} implies that
\begin{equation*}
e_k(id:\ell_p^n(\R)\to\ell_q^n(\R))\le 2^{1/\bar p}e^{1-\frac{p}{q}}_k(id:\ell_p^n(\R)\to\ell_\infty^n(\R)),
\end{equation*}
and the result follows from the bound just proven for $e_k(id:\ell_p^n(\R)\to\ell_\infty^n(\R))$.
\end{proof}

\section{Extensions}\label{sec:4}

We collect further facts about the entropy numbers of identities between finite-dimensional spaces, which seem to be very well known in the community.
Several parts of Theorem \ref{hlavna1''} can be proved in different ways and we present some of these alternative approaches. Finally,
we extend Theorem \ref{hlavna1''} also to the complex setting.

\subsection{Alternative use of volume arguments}\label{sec:4.1}

Without much work, one can show that Lemma \ref{lem:entropy_finite} is optimal up to the constant $4^{1/\bar p}$. Indeed,
if $X$ is a $n$-dimensional real vector spaces equipped with a (quasi)-norm and $B_X$ is covered by $2^{k-1}$ translations of $rB_X$,
then
$$
\vol(B_X)\le 2^{k-1}r^n\vol(B_X).
$$
This implies that $e_k(id:X\to X)\ge 2^{-\frac{k-1}{n}}.$

If $0<p\le q\le \infty$, we can write
$$
2^{-\frac{k-1}{n}}\le e_k(id:\ell_p^n(\R)\to\ell_p^n(\R))\le e_k(id:\ell_p^n(\R)\to\ell_q^n(\R))\cdot\|id:\ell_q^n(\R)\to\ell_p^n(\R)\|.
$$
By Lemma \ref{lem:norm}, the last norm is equal to $n^{1/p-1/q}$ and we obtain the lower bound in \eqref{eq:2ekviv1''} for $k\ge n$.


The upper bound on $e_k(id:\ell^n_p(\R)\rightarrow \ell^n_q(\R))$ for $0<p<q\le\infty$ and $\log_2 n\le k\le n$
was proven in Section \ref{sec:3} first for $q=\infty$ and, afterwards, for $p<q<\infty$ by interpolation. Actually, one can adapt
the original proof for $q=\infty$ also to the general setting of $q\le\infty.$ Unfortunately, this comes at a price of further technical
difficulties and we, together with \cite{Pietsch}, preferred to go through interpolation.

\subsection{Alternative use of combinatorial arguments}

Next comment considers the lower bound in \eqref{eq:2ekviv1''} for $\log_2 n\le k\le n$. Lemma \ref{lem:comb}
constructs a large subset (namely $(2m)^{-1/p}A_m$) of $B_p^n$ with the elements having large mutual distances in $\ell_q^n.$
There is, however, more ways to achieve a similar result. We present a statement, which is very well-known in coding theory, cf. \cite{coding:1,coding:2},
and appeared also in \cite{FPRU,FR} and \cite{BCKV} in connection with Gelfand widths and optimality results of sparse recovery.
There the reader can also find a short proof, which is very much in the spirit of the proof of Lemma \ref{lem:comb}.

\begin{lem}\label{lem:hol} Let $k\le n$ be two positive integers. Then there are $N$ subsets $T_1,\dots,T_N$ of $\{1,\dots,n\}$, such that
\begin{enumerate}
\item[(i)] $\displaystyle N\ge \Bigl(\frac{n}{4k}\Bigr)^{k/2}$,\vskip.2cm
\item[(ii)] $|T_i|=k$ for all $i=1,\dots,N$ and \vskip.2cm
\item[(iii)] $|T_i\cap T_j|<k/2$ for all $i\not =j.$
\end{enumerate}
\end{lem}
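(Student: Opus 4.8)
The plan is to prove Lemma~\ref{lem:hol} by a greedy (volumetric counting) argument in the spirit of Lemma~\ref{lem:comb}. First I would set up the ambient family $\mathcal{F}=\{T\subset\{1,\dots,n\}:|T|=k\}$, so that $|\mathcal{F}|=\binom{n}{k}$, and for a fixed $T\in\mathcal{F}$ I would count the ``bad'' sets $S\in\mathcal{F}$ with $|S\cap T|\ge k/2$, i.e.\ those sharing at least $\lceil k/2\rceil$ elements with $T$. Such an $S$ is obtained by choosing $j\ge\lceil k/2\rceil$ elements inside $T$ and $k-j$ elements outside, so the number of bad sets is $\sum_{j\ge \lceil k/2\rceil}\binom{k}{j}\binom{n-k}{k-j}$. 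I would bound this crudely: since the worst term is around $j=\lceil k/2\rceil$, one has $\#\{\text{bad }S\}\le 2^k\binom{n-k}{\lceil k/2\rceil}\le 2^k\binom{n}{\lceil k/2\rceil}$, using $\sum_j\binom{k}{j}=2^k$ and monotonicity of $\binom{n-k}{\cdot}$ up to its midpoint (note $k-\lceil k/2\rceil=\lfloor k/2\rfloor\le\lceil k/2\rceil$).

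Then I would run the greedy selection: start with any $T_1\in\mathcal{F}$; having chosen $T_1,\dots,T_l$, if some $T\in\mathcal{F}$ has $|T\cap T_i|<k/2$ for all $i\le l$, adjoin it as $T_{l+1}$; otherwise stop. When the algorithm stops with $N$ sets selected, every $T\in\mathcal{F}$ is ``bad'' for at least one $T_i$, hence $\binom{n}{k}\le N\cdot 2^k\binom{n}{\lceil k/2\rceil}$, giving
\begin{equation*}
N\ge \frac{1}{2^k}\cdot\frac{\binom{n}{k}}{\binom{n}{\lceil k/2\rceil}}.
\end{equation*}
Properties (ii) and (iii) are immediate from the construction.

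It remains to turn this into the clean bound $N\ge (n/(4k))^{k/2}$. The ratio $\binom{n}{k}/\binom{n}{\lceil k/2\rceil}$ I would expand as a product of $k-\lceil k/2\rceil=\lfloor k/2\rfloor$ factors of the form $(n-\lceil k/2\rceil-i)/(\lceil k/2\rceil-i+\text{shift})$; each such factor is at least $(n-k)/k$, so the ratio is at least $((n-k)/k)^{\lfloor k/2\rfloor}$. Combining with the $2^{-k}$ factor and being slightly wasteful, $N\ge 2^{-k}((n-k)/k)^{\lfloor k/2\rfloor}$, and I would check that for $k\le n$ this is at least $(n/(4k))^{k/2}$ (when $k=n$ both sides are comparably small and the statement $N\ge 1$ holds trivially; when $k\le n/2$ the factor $(n-k)/k\ge n/(2k)$ dominates the $2^{-2}$ loss). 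For the edge cases $k$ odd versus even one just carries $\lceil k/2\rceil,\lfloor k/2\rfloor$ through; I would verify $(n/(4k))^{k/2}$ absorbs the rounding.

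The main obstacle is bookkeeping the binomial ratio cleanly enough to land exactly on the stated constant $4$ and exponent $k/2$ — the greedy argument itself is routine, but one must be careful that the crude bound $2^k\binom{n}{\lceil k/2\rceil}$ on the bad set is not \emph{too} lossy to still yield $(n/(4k))^{k/2}$; if it were, I would instead bound the bad count more carefully by $\binom{k}{\lceil k/2\rceil}\binom{n-k}{\lfloor k/2\rfloor}\cdot k \le 2^k\binom{n-k}{\lfloor k/2\rfloor}$ and compare with $\binom{n}{k}\ge\binom{n-k}{\lfloor k/2\rfloor}\cdot\big((n-k)/k\big)^{\lceil k/2\rceil}$-type estimates, choosing the splitting so the surviving product of $\lceil k/2\rceil$ or $\lfloor k/2\rfloor$ fractions each exceeds $n/(4k)$. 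As the excerpt notes, a short proof in exactly this spirit appears in \cite{FPRU,FR,BCKV}, so I would ultimately refer there for the fully optimized constants and present only the greedy skeleton here.
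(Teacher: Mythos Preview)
Your greedy approach is exactly what the paper has in mind: it does not give its own proof of Lemma~\ref{lem:hol} but refers to \cite{FPRU,FR,BCKV} and says the argument is ``in the spirit of the proof of Lemma~\ref{lem:comb}'', i.e.\ precisely the volumetric selection you outline.

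One remark on the bookkeeping. Your crude bad-set estimate $2^{k}\binom{n}{\lceil k/2\rceil}$ is slightly too generous: combined with $\binom{n}{k}/\binom{n}{\lceil k/2\rceil}\ge((n-k)/k)^{\lfloor k/2\rfloor}$ it yields $N\ge((n-k)/(4k))^{k/2}$ rather than $(n/(4k))^{k/2}$, and the proposed refinement in your last paragraph has the same defect. The fix is to keep the denominator as $2^{k}\binom{n-k}{\lfloor k/2\rfloor}$ and pair the factors in $\binom{n}{k}/\binom{n-k}{\lfloor k/2\rfloor}$ more carefully. For even $k=2m$ one writes
\[
\frac{\binom{n}{2m}}{\binom{n-2m}{m}}=\prod_{i=0}^{m-1}\frac{n-i}{2m-i}\cdot\prod_{i=m}^{2m-1}\frac{n-i}{n-m-i},
\]
where each factor in the first product is $\ge n/(2m)$ (since $n\ge 2m$) and each factor in the second is $\ge 1$; this gives $\binom{n}{2m}/\binom{n-2m}{m}\ge(n/(2m))^{m}$ and hence $N\ge(n/(8m))^{m}=(n/(4k))^{k/2}$ on the nose. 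The odd case is handled analogously. This is the only place your sketch needs tightening; the overall strategy is correct and matches the standard argument the paper cites.
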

Similarly to Lemma \ref{lem:comb}, Lemma \ref{lem:hol} can be used to produce a large set of points in $B_p^n$ with large distance in $\ell_q^n.$
It is enough to take $x_j=k^{-1/p}\chi_{T_j}$, $j=1,\dots,N$, where $T_j$'s are the sets from Lemma \ref{lem:hol} and $\chi_{T_j}$
is the indicator vector of $T_j$.

\subsection{Complex spaces}
It is surprisingly easy to extend the estimates of Theorem \ref{hlavna1''} to the setting of complex spaces $\ell_p^n(\C).$
The main result then reads as follows.
\begin{thm}\label{hlavna2}
Let $0<p,q\leq \infty$ and let $n\in\N$.
\begin{itemize}
\item[a)] If $0<p\leq q\leq \infty $ then for all $k\in \mathbb{N}$ it holds
\begin{equation}\label{eq:2ekviv1'}
e_k (id:\ell^n_p(\mathbb{C})\rightarrow \ell^n_q(\mathbb{C}))\sim 
\begin{cases} 1 & \text{if} \quad 1\leq k\leq \log_2(2n),\\
\displaystyle \biggl(\frac{\log_2(1+2n/k)}{k}\biggr)^{\frac{1}{p}-\frac{1}{q}} & \text{if} \quad \log_2(2n)\leq k\leq 2n,\\[10pt]
\displaystyle 2^{-\frac{k-1}{2n}}n^{\frac{1}{q}-\frac{1}{p}} & \text{if} \quad 2n\leq k.
\end{cases}
\end{equation}
\item[b)] If $0<q\leq p\leq \infty$ then for all $k\in \mathbb{N}$ it holds
\begin{equation}\label{eq:2ekviv2'}
 e_k (id:\ell^n_p(\mathbb{C})\rightarrow \ell^n_q(\mathbb{C}))\sim  2^{-\frac{k-1}{2n}}n^{\frac{1}{q}-\frac{1}{p}}.
\end{equation}
\end{itemize}
The constants of equivalence in both \eqref{eq:2ekviv1'} and \eqref{eq:2ekviv2'} may depend on $p$ and $q$, but are independent of $k$ and $n$.
\end{thm}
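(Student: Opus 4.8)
The plan is to reduce the complex statement to the real one already proved, exploiting the elementary identification $\C^n \cong \R^{2n}$ as vector spaces, under which the norm $\|\cdot\|_p$ on $\C^n$ is comparable to the norm $\|\cdot\|_p$ on $\R^{2n}$. First I would make this identification precise: write a complex vector $x = (x_1,\dots,x_n) \in \C^n$ as the real vector $\Phi(x) = (\re x_1, \im x_1, \dots, \re x_n, \im x_n) \in \R^{2n}$, and observe that for every $0 < p \le \infty$ one has
\begin{equation}
c_p \|\Phi(x)\|_{p,\R^{2n}} \le \|x\|_{p,\C^n} \le C_p \|\Phi(x)\|_{p,\R^{2n}}
\end{equation}
with constants $c_p, C_p$ depending only on $p$; indeed this is just the equivalence $|z| = (|\re z|^2 + |\im z|^2)^{1/2} \sim \max(|\re z|,|\im z|) \sim (|\re z|^p + |\im z|^p)^{1/p}$ applied coordinatewise. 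Since $\Phi$ is a linear bijection, the operator $id: \ell_p^n(\C) \to \ell_q^n(\C)$ is, up to composing with the fixed isomorphisms $\Phi$ on both sides, the same as $id: \ell_p^{2n}(\R) \to \ell_q^{2n}(\R)$, and by submultiplicativity (Theorem \ref{thm:entropy}(iii)) the entropy numbers of the two operators are comparable with constants depending only on $p$ and $q$:
\begin{equation}
e_k(id:\ell_p^n(\C)\to\ell_q^n(\C)) \sim e_k(id:\ell_p^{2n}(\R)\to\ell_q^{2n}(\R)).
\end{equation}

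Next I would simply substitute $2n$ for $n$ in Theorem \ref{hlavna1''}. In part b) this immediately gives $e_k \sim 2^{-(k-1)/(2n)}(2n)^{1/q-1/p} \sim 2^{-(k-1)/(2n)}n^{1/q-1/p}$, since $(2n)^{1/q-1/p} \sim n^{1/q-1/p}$ with constants depending only on $p,q$; this matches \eqref{eq:2ekviv2'}. In part a), the three regimes of Theorem \ref{hlavna1''} with $n$ replaced by $2n$ become exactly $1 \le k \le \log_2(2n)$, $\log_2(2n) \le k \le 2n$, and $2n \le k$, with the stated values $1$, $\bigl(\log_2(1+2n/k)/k\bigr)^{1/p-1/q}$, and $2^{-(k-1)/(2n)}(2n)^{1/q-1/p} \sim 2^{-(k-1)/(2n)}n^{1/q-1/p}$ respectively — precisely \eqref{eq:2ekviv1'}. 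One cosmetic point to check: the boundary $k = \log_2(2n)$ and $k = 2n$ lie in the overlap of two regimes, but the two formulas agree there up to constants (as in the real case), so the equivalence holds for every $k \in \N$.

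I do not expect any serious obstacle here; the only things needing a little care are keeping track of which constants are allowed to depend on $p$ and $q$ (all of them, which is fine), and noting that the norm equivalence constants $c_p, C_p$ for $p$ and for $q$ get absorbed into the final equivalence constants via submultiplicativity applied to the diagram $\ell_p^n(\C) \xrightarrow{\Phi} \ell_p^{2n}(\R) \xrightarrow{id} \ell_q^{2n}(\R) \xrightarrow{\Phi^{-1}} \ell_q^n(\C)$, together with $e_1$ of the two isomorphisms being bounded by their norms. The mild bookkeeping item — hardly an obstacle — is confirming that replacing $n$ by $2n$ genuinely transforms the three cutoffs of \eqref{eq:2ekviv1''} into the three cutoffs of \eqref{eq:2ekviv1'} and that $(2n)^{1/q-1/p}$ may be replaced by $n^{1/q-1/p}$ throughout at the cost of a $p,q$-dependent constant.
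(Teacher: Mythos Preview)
Your proposal is correct and is essentially identical to the paper's own proof: the paper introduces the maps ${\mathcal J}(z)=(\re z_1,\im z_1,\dots,\re z_n,\im z_n)$ and its inverse ${\mathcal J'}$, notes that both have norms bounded independently of $n$ as maps between $\ell_p^n(\C)$ and $\ell_p^{2n}(\R)$, and then uses submultiplicativity exactly as in your diagram to obtain $e_k(id:\ell_p^n(\C)\to\ell_q^n(\C))\sim e_k(id:\ell_p^{2n}(\R)\to\ell_q^{2n}(\R))$, after which Theorem~\ref{hlavna1''} with $n$ replaced by $2n$ gives the result.
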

\begin{proof}
We observe, that the mapping
$$
{\mathcal J}(z)={\mathcal J}(z_1,\dots,z_n)=(\re(z_1),\im(z_1),\dots,\re(z_n),\im(z_n))
$$
is bounded from $\ell_p^n(\C)$ to $\ell_p^{2n}(\R)$ with the norm bounded by a quantity independent of $n$. The same is true about 
$$
{\mathcal J'}(x)={\mathcal J'}(x_1,\dots,x_{2n})=(x_1+ix_2,\dots,x_{2n-1}+ix_{2n})
$$
as a mapping from $\ell_p^{2n}(\R)$ to $\ell_p^{n}(\C)$.

Using the submultiplicativity of entropy numbers, we get
\begin{align*}
e_k (id:\ell^n_p(\mathbb{C})\rightarrow \ell^n_q(\mathbb{C}))&\le
\|{\mathcal J}:\ell^n_p(\mathbb{C})\rightarrow \ell^{2n}_p(\R)\|\\
&\qquad \cdot e_k(id:\ell^{2n}_p(\R)\rightarrow \ell^{2n}_q(\R))
\cdot \|{\mathcal J'}:\ell^{2n}_q(\R)\rightarrow \ell^{n}_q(\mathbb{C})\|\\
\intertext{and}
e_k (id:\ell^{2n}_p(\R)\rightarrow \ell^{2n}_q(\R))&\le
\|{\mathcal J'}:\ell^{2n}_p(\R)\rightarrow \ell^{n}_p(\mathbb{C})\|\\
&\qquad \cdot e_k(id:\ell^{n}_p(\mathbb{C})\rightarrow \ell^{n}_q(\mathbb{C}))
\cdot \|{\mathcal J}:\ell^{n}_q(\mathbb{C})\rightarrow \ell^{2n}_q(\R)\|.
\end{align*}
This can be summarized into
$$
e_k (id:\ell^n_p(\mathbb{C})\rightarrow \ell^n_q(\mathbb{C}))
\sim e_k (id:\ell^{2n}_p(\R)\rightarrow \ell^{2n}_q(\R))
$$
with constants of equivalence independent on $k$ and $n$. The result then follows from Theorem \ref{hlavna1''}.
\end{proof}
\vskip.5cm
\noindent{\bf Acknowledgement}\\[10pt]
The second author was supported by the grant P201/18/00580S of the Grant Agency of the Czech Republic and by the Neuron Fund for Support of Science.

\end{document}